\theoremstyle{plain}
\newcommand{\E}{\mathbb{E}}
\newcommand{\N}{\mathbb{N}}
\renewcommand{\P}{\mathbb{P}}
\newcommand{\Q}{\mathbb{Q}}
\newcommand{\R}{\mathbb{R}}
\newcommand{\EE}{\mathcal{E}}
\newcommand{\FF}{\mathcal{F}}
\newcommand{\HH}{\mathcal{H}}
\newcommand{\KK}{\mathcal{K}}
\newcommand{\PPP}{\mathscr{P}}
\newcommand{\given}{\,|\,}
\newcommand{\sbgiven}{\,\big|\,}
\newcommand{\bgiven}{\,\Big|\,}
\newcommand{\supp}{\textnormal{supp}}
\newcommand{\law}{\textsf{Law}}
\newcommand{\wto}{\stackrel{w}{\Longrightarrow}}
\newcommand{\eps}{\varepsilon}
\newcommand{\one}{\mathds{1}}
\newcommand{\la}{\lambda}
\newcommand{\normal}{\mathsf{N}}
\newcommand{\rd}{\mathrm{d}}
\def\sT{{\sf T}}
\newcommand{\Ber}{\textnormal{Ber}}
\newcommand{\Unif}{\textnormal{Unif}}
\newcommand{\Beta}{\textnormal{Beta}}
\newcommand{\wt}[1]{\widetilde{#1}}
\DeclareMathOperator{\Var}{Var}
\DeclareMathOperator{\Cov}{Cov}
\newtheorem{thm}{Theorem}[section]
\newtheorem{prop}[thm]{Proposition}
\newtheorem{cor}[thm]{Corollary}
\newtheorem{lemma}[thm]{Lemma}
\theoremstyle{definition}
\newtheorem{defn}[thm]{Definition}
\newtheorem{eg}[thm]{Example}
\title{Agreement and Statistical Efficiency in Bayesian Perception Models}
\author{Yash Deshpande\thanks{This work was conducted while YD was at 
the Department of Mathematics, Massachusetts Institute of Technology. 
Supported  by ARO MURI W911NF1910217}
\and Elchanan Mossel\thanks{Department of Mathematics and IDSS, Massachusetts Institute of Technology. Supported by Simons-NSF collaboration on deep learning NSF DMS-2031883, by ARO MURI W911NF1910217, by 
Vannevar Bush Faculty Fellowship award ONR-N00014-20-1-2826 and by a Simons Investigator Award in Mathematics (622132)} \and 
Youngtak Sohn \thanks{Department of Mathematics Massachusetts Institute of Technology. Supported by Simons-NSF collaboration on deep learning NSF DMS-2031883, by
Vannevar Bush Faculty Fellowship award ONR-N00014-20-1-2826}}
\begin{document}

\maketitle
\begin{abstract}
 Bayesian models of group learning are studied in Economics since the 1970s 
and more recently in computational linguistics. The models from Economics postulate that agents maximize utility in their communication and actions. 
The Economics models do not explain the ``probability matching" phenomena that are observed in many experimental studies. 
To address these observations, Bayesian models that do not  formally fit into the economic utility maximization framework were introduced. 
In these models individuals sample from their posteriors in communication. 
In this work we study the asymptotic behavior of such models on connected networks with repeated communication. 
Perhaps surprisingly, despite the fact that individual agents are not utility maximizers in the classical sense, we establish that the individuals ultimately agree and furthermore show that the limiting posterior is Bayes optimal.

We explore the interpretation of our results in terms of Large Language Models (\textsc{LLM}s). In the positive direction our results can be interpreted as stating that interaction between different LLMs can lead to optimal learning.
However, we provide an example showing how misspecification may lead LLM agents to be overconfident in their estimates.  
\end{abstract}
\section{Introduction}\label{sec:intro}

How do agents learn from each other? 

In this work we are interested in Bayesian models of learning with the goal of bringing together perspectives of theoretical Economics, experimental and computational psychology, and language models.  
We are interested in scenarios where agents, either humans or artificial intelligence, repeatedly observe each other and communicate on a connected graph representing the social network connecting them.

Since the foundational work of Aumann~\cite{Aumann:76}, economists have modeled collections of rational individuals as Bayesian agents who learn from each other and take actions as to maximize utility. The network setup was introduced in~\cite{GeanakoplosPolemarchakis:82} and the results of~\cite{Aumann:76,GeanakoplosPolemarchakis:82} establish that agents ultimately agree in their posterior if each action reveals the current individual posterior mean. We will call this the {\em belief model}. More recent research~\cite{DemarzoSkiadas:99,Ostrovsky:12}, see also ~\cite{MosselTamuz:17,MMSO:18}, establish that assuming that the initial information the agents receive is independent (conditionally given the unknown state of the world), then the posterior is the optimal Bayes posterior given all private information. 

Another relevant body of work in Economics involve scenarios where the agents are again utility maximizers but their actions do not reveal their posterior exactly. This happens for example if the space of possible actions that agents can take is finite. In this case, which we will call the {\em action model}, it is known that agents asymptotically agree~\cite{GaleKariv:03,RoSoVi:09,MoSlTa:14b}. However, in general, the agents do not reach the optimal posterior given all private signals. Learning results are only established if the number of individuals goes to infinity (with additional conditions)~\cite{MoSlTa:14b,MoSlTa:15c}. 

Work in psychology, see e.g. the surveys~\cite{Myers:14,Vulkan:00}, found that individuals do not follow the utility maximizing paradigm. 
Thus their behavior fits neither the action models nor the belief models. 
Instead, they apply ``probability matching" - taking actions based on samples from their posterior rather than based on its mean. 
This was formalized as a Bayesian model in~\cite{GriffithsTenenbaum:06,GoTeFeGr:08,VuGoGrTe:14} among other works.  
We call these models {\em sampling models}. There is a large body of work that aims to justify various sampling models and their variants as models how individuals communicate, see e.g.~\cite{VATB:09,SaGrNa:10,VuGoGrTe:14,KMRB:10}
 and the survey~\cite{GeHoTe:15}. However, to the best of our knowledge, the sampling models were never theoretically analyzed when there is more than 1 agent. In this work we study the sampling model and mathematically establish the consequences in terms of agreement and learning.

Furthermore, we reinterpret the sampling models as an abstraction of large language models (LLMs) communicating with each other. It is well-known that LLMs iteratively sample posteriors given the data rather than maximizing utility. Moreover, now that many LLMs are in use, a natural concern is that the output of one LLM may be used by others and this will lead to 
lower quality performance (a related concern has been discussed in \cite{Pan2023risk}). In this work, we show that indeed, if we model two LLMs as communicating Bayesian agents, then the missepcification of the posterior structure leads to suboptimal performance. Further explanation is given in the next section.  


\subsection{A theoretical model of communicating LLMs}



To connect our study with the models in this paper, we abstractly consider each LLM as a Bayesian agent. We focus on the conceptual aspects of their behavior and assume that they sample from learned distributions. Let's consider two such agents: LLM1 and LLM2. Each has observed its initial training data (initial signal). Now, envision a conversation between LLM1 and LLM2 on a specific question, such as ``when will LLMs write better ML papers than tenured faculty?"

As Bayesian agents, both LLM1 and LLM2 answer the question by sampling from their respective posteriors. However, they continuously update their models based on the responses received from each other. We explore whether this iterative process leads to improved answers and better learning for both LLMs.

Our findings suggest that if each LLM has a good understanding of how the other LLM works and they are trained on independent data, their conversation will indeed lead to improvements in their posterior distributions, resulting in more accurate answers.

On the other hand, we also examine a scenario where the LLMs are misspecified, treating each other's responses as new independent samples generated to answer the question. For example, they may view each others answer as coming from a new relevant web page or book that addresses the question. In this case, we show that both LLMs will converge to the same suboptimal posterior, and that they will be overconfident in their estimates. In fact, our analysis in Section \ref{subsec:misspecified} demonstrates that they will converge to a point mass, where the location of the point mass is random and has zero probability of being equal to the optimal posterior mean.

\section{Models and main results}
\label{sec:model}
In this section we present the main models and results of the paper. 

In Section~\ref{subsec:model}, we define the sampling model and prove the agreement and statistically efficient learning for the model. Namely, we consider a network of agents, where the agents compute their posterior distribution exactly but only communicate samples to their neighbors. We prove that {\em for any given finite graph, all agents agree asymptotically} and moreover that they {\em all converge to the optimal Bayes posterior given all the data} under mild assumptions on the signal structure. For the exact statement, see Theorems \ref{thm:agreement} and \ref{thm:eff:learning} below. Thus for sampling model, both agreement and statistical efficiency (``learning") hold. We thus prove analogs of the previously mentioned theoretical Economics results for the belief models in the sampling model.

In Section~\ref{subsec:Bernoulli}, we study a variant of the sampling model, which we call \textit{Bernoulli sampling model}, where the agents only communicate $0$ and $1$ based on their samples from posteriors. We show that the analogous results for the sampling model hold here. That is, agents agree on the posterior mean, which is the sufficient statistics for the Bernoulli sampling model, and the agreed posterior mean is given by the optimal posterior mean.

In Section~\ref{subsec:misspecified}, we analyze the Bernoulli sampling model in a misspecified situation, where there are $2$ agents and each agent believes that the samples they see from the other agent are independent coin tosses with the original unknown bias. We show that in this misspecified setting, the agents will eventually agree, however, their posteriors will converge to a point mass. Further, we show that the location of the point mass does not equal the mean of the optimal posterior mean, almost surely. 

As we mentioned earlier, a major motivation for studying the (Bernoulli) sampling model is to abstractly study interactions between different LLMs. In this abstraction, the positive results show that if the LLMS are trained on disjoint sets of data and are aware that they are communicating with other LLMs, they learn better. 
However, the misspecified case, they learn worse and are overconfident in their results.

\subsection{The sampling model}\label{subsec:model}
We consider the following \textit{sampling model} for social learning on a network. There is
an unknown state $\theta\in \R^d$ drawn from a common prior $\nu$
on $\R^d$. Agents live at the vertices of a network $G = (V, E)$, where we
identify $V \equiv [n]$. At time $t=0$, agents obtain the initial private signals
$S_i \sim \P_i (\cdot | \theta)\in \PPP(\R^d)$, conditionally independent given $\theta$. We call $\{\P_i(\cdot |\theta)\}_{i\in V}$ the \textit{signal structure} of the sampling model.

At time steps $t\in \N$, each agent $i\in V$ exchanges information with their neighbors $\partial i\subseteq V$ as follows. At time $t=1$, denote by $\nu_i(1):=\law (\theta \given S_i)$ the posterior distribution of $\theta$ given the initial information $\HH_i(1):= \sigma(S_i)$ of agent $i$, where $\sigma(S_i)$ is the $\sigma$-algebra generated by $S_i$. Agent $i$ then samples $Y_i(1)\sim \nu_i(1)$ given $\HH_i(1)$, conditionally independent of the other agents, and communicates $Y_i(1)$ to all of its neighbor $j\in \partial i$. More generally, at time $t\in \N$, denote the information agent $i$ has seen at time $t$ by the $\sigma$-algebra
\begin{equation*}
    \HH_i(t):= \sigma(S_i,(Y_j(s))_{j\in \partial i, s<t})
\end{equation*}
Then, at time $t$, agent $i$ samples $Y_{i}(t) \sim \nu_{i}(t):=\law(\theta\given \HH_i(t))$ given $\HH_i(t)$, conditionally independent of all other information, and communicates $Y_i(t)$ to its neighbors.

We consider the following social learning questions:
\begin{enumerate}
\item Do the posterior beliefs $\nu_{i}(t)$ converge to a common posterior
$\nu(\infty)$, as $t\to\infty$, independently of $i$?
\item If so, does the common posterior belief $\nu(\infty)$ coincide with the optimal Bayes posterior given all private information $\law(\theta \given S_1,...,S_n)$?
\end{enumerate}
Our first theorem establishes the positive answer to the first question under the natural assumption that the network $G$ is connected. Denote by $\HH_i(\infty):=\cup_{t\geq 1}\HH_i(t)$ for $i\in V$.
\begin{thm}\label{thm:agreement}
For every agent $i\in V$, the sequence of its (random) posterior distribution $\nu_i(t)$ converges weakly to $\nu_i(\infty):=\law(\theta \given \HH_i(\infty))$, almost surely. Furthermore, if agents $i$ and $j$ are connected, then $\nu_i(\infty)=\nu_j(\infty)$ holds, almost surely. Therefore, provided that $G$ is connected, all agents in $G$ agree asymptotically.
\end{thm}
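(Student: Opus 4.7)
The plan is to combine Doob's martingale convergence theorem with a strong law of large numbers for the observed samples.

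For the first (convergence) assertion, fix an agent $i$ and a bounded continuous $f\colon\R^d\to\R$. On the increasing filtration $(\HH_i(t))_{t\ge 0}$, the process $\int f\,\rd\nu_i(t)=\E[f(\theta)\given \HH_i(t)]$ is a bounded martingale, so it converges almost surely to $\E[f(\theta)\given \HH_i(\infty)]=\int f\,\rd\nu_i(\infty)$. Fixing a countable convergence-determining family $\{f_k\}\subset C_b(\R^d)$ and intersecting the resulting full-measure events promotes this to the a.s.\ weak convergence $\nu_i(t)\Rightarrow\nu_i(\infty)$.

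For the agreement claim, consider first adjacent agents $i\sim j$, and let $(\TT_t)$ be the natural filtration on the whole system, so that $\HH_i(t)\subseteq \TT_{t-1}$ and, conditional on $\TT_{t-1}$, the sample $Y_i(t)$ is drawn from $\nu_i(t)$ using external randomness independent of $\TT_{t-1}$. Then $D_t:=f(Y_i(t))-\int f\,\rd\nu_i(t)$ is a bounded $(\TT_t)$-martingale-difference sequence, and we split
\[
\frac{1}{n}\sum_{t=0}^{n-1} f(Y_i(t)) \;=\; \frac{1}{n}\sum_{t=0}^{n-1} D_t \;+\; \frac{1}{n}\sum_{t=0}^{n-1}\int f\,\rd\nu_i(t).
\]
The first average tends to $0$ a.s.\ by the SLLN for bounded martingale differences, and the second tends a.s.\ to $\int f\,\rd\nu_i(\infty)$ by Step~1 and Ces\`aro. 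The left-hand side is $\sigma(Y_i(s):s\ge 0)$-measurable, hence $\HH_j(\infty)$-measurable since $j\in\partial i$; therefore so is the limit $\int f\,\rd\nu_i(\infty)$. Ranging $f$ over the countable family $\{f_k\}$ shows that the random probability measure $\nu_i(\infty)$ itself is $\HH_j(\infty)$-measurable.

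Now set $\HH_{ij}:=\HH_i(\infty)\cap \HH_j(\infty)$. By the previous step $\nu_i(\infty)$ is measurable with respect to both $\HH_i(\infty)$ and $\HH_j(\infty)$, hence $\HH_{ij}$-measurable, and symmetrically so is $\nu_j(\infty)$. For any Borel $A\subseteq\R^d$, the tower property (using $\HH_{ij}\subseteq \HH_i(\infty)$) combined with $\HH_{ij}$-measurability of $\nu_i(\infty)(A)$ gives
\[
\nu_i(\infty)(A)\;=\;\E\bigl[\nu_i(\infty)(A)\bgiven \HH_{ij}\bigr]\;=\;\E\bigl[\one_A(\theta)\bgiven \HH_{ij}\bigr],
\]
and the analogous computation for $j$ yields $\nu_j(\infty)(A)=\E[\one_A(\theta)\given \HH_{ij}]$. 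Hence $\nu_i(\infty)=\nu_j(\infty)$ almost surely, and the general connected case follows by chaining this equality along a path in $G$.

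The main obstacle is the SLLN step: one has to identify a global filtration on the joint dynamics under which $D_t$ is a bona fide martingale-difference sequence, and then upgrade a.s.\ convergence for individual test functions to joint a.s.\ convergence across a countable convergence-determining class so that the limit identifies the entire random measure $\nu_i(\infty)$, not just its integrals against one $f$ at a time.
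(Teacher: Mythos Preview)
Your convergence argument is essentially the paper's: both invoke L\'evy's upward theorem / bounded martingale convergence on a countable convergence-determining class (the paper uses CDFs at rational points, you use a countable family in $C_b$), and then upgrade to a.s.\ weak convergence.

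For agreement, your route is genuinely different from the paper's and, in my view, cleaner. The paper argues by contradiction through a mean-squared-error comparison: it introduces an averaging lemma (if $\min_{\lambda}\E[(\lambda Z_1+(1-\lambda)Z_2)^2]=\E[Z_1^2]=\E[Z_2^2]$ then $Z_1=Z_2$), assumes $F_i(u)\ne F_j(u)$ on a positive-probability set, and then constructs an $\HH_i(\infty)$-measurable estimator $Z_1(T)=\lambda F_i(u)+\frac{1-\lambda}{T}\sum_{t\le T}\one(Y_j(t)\le u)$ whose MSE eventually beats that of $F_i(u)$, contradicting the optimality of the posterior mean. This requires careful control of two error terms (orthogonality of the sampling noise and $L^2$ convergence of $F_{jt}(u)$ to $F_j(u)$). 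Your argument bypasses the variance bookkeeping entirely: the martingale SLLN gives directly that $\int f\,\rd\nu_i(\infty)$ is an a.s.\ limit of functionals of $(Y_i(s))_{s\ge 0}$ and hence $\HH_j(\infty)$-measurable, after which the identity $\nu_i(\infty)(A)=\E[\one_A(\theta)\given \HH_i(\infty)\cap\HH_j(\infty)]=\nu_j(\infty)(A)$ drops out of the tower property. What this buys you is a shorter proof and the stronger intermediate statement that $\nu_i(\infty)$ is literally $\HH_j(\infty)$-measurable; what the paper's approach buys is a quantitative handle on how the approximation improves with $T$, which could be useful if one later wants convergence rates.

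One small indexing point worth tidying: your claim $\HH_i(t)\subseteq\TT_{t-1}$ needs $\TT_{-1}$ to be defined (take $\TT_{-1}:=\sigma(S_1,\dots,S_n)$), and you should say explicitly that ``conditionally independent of all other information'' in the sampling rule implies $\E[f(Y_i(t))\given \TT_{t-1}]=\int f\,\rd\nu_i(t)$, not merely $\E[f(Y_i(t))\given \HH_i(t)]=\int f\,\rd\nu_i(t)$; otherwise $(D_t)$ is not a $(\TT_t)$-martingale difference. These are cosmetic, not substantive.
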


The proof of Theorem \ref{thm:agreement} is given in Section \ref{sec:proof}. The proof is based on the martingale convergence theorem and on a sampling analog of the \textit{imitation principle}. 
It states that if in the limit the neighboring agents $i$ and $j$ each converge to a different posteriors, then by observing each other they can learn the other agents posterior and then improve their own posterior (the argument uses convexity). 

Observe that by the nature of the model, the agents $i,j\in V$ do not communicate with each other if $i$ and $j$ are not connected in $G$. Thus, when $G$ is not connected, we can apply the theorem above to every connected component in $G$, and conclude that agents asymptotically agree in each connected component.

Next, we show that the answer to the second question is also positive under mild assumptions on the signal structure $\{\P_i(\cdot \given \theta)\}_{i\in V}$, which we now state.

\begin{defn}[Signal structure satisfying the Bayes law]\label{def:bayes}
We say $\{\P_i(\cdot |\theta)\}_{i\in V}$ satisfies the Bayes law if there exists a deterministic $f:(\R^d)^{n}\to \R_{>0}$ such that for every Borel measurable $A\subseteq \R^d$ satisfying $\P(\theta\in A)\in (0,1)$ and conditionally independent signals $S_i\given \theta \sim \P_i(\cdot\given \theta)$, we have
\begin{equation}\label{eq:bayeslaw}
    \P(\theta \in A\,|\, S_1,\ldots,S_n)=\frac{\prod_{i=1}^{n}\P(\theta\in A\,|\,S_i)}{\P(\theta\in A)^{n-1}}\cdot f(S_1,..,S_n)\quad\textnormal{almost surely.}
\end{equation}
\end{defn}
A crucial remark is that the Definition \ref{def:bayes} is a natural generalization of the usual Bayes law to the case where $S_i$ may be continuous. When the signals $\{S_i\}_{i\in V}$ are discrete, the equation \eqref{eq:bayeslaw} is easily satisfied: by the conditional independence of $S_i$ given $\theta$, if $\P(\theta \in A)\neq 0$,
\begin{equation*}
\begin{split}
\P(\theta \in A \given S_1,\ldots, S_n)
&=\frac{\Big(\prod_{i=1}^{n}\P(S_i=s_i\given \theta \in A)\Big)\cdot\P(\theta\in A)}{\P(S_1=s_1,\ldots, S_n=s_n)}\\
&=\frac{\prod_{i=1}^{n}\P(\theta\in A\,|\,S_i)}{\P(\theta\in A)^{n-1}}\cdot \frac{\prod_{i=1}^{n}\P(S_i=s_i)}{\P(S_1=s_1,\ldots, S_n=s_n)}\,.
\end{split}
\end{equation*}
When $S_i$'s are not discrete, see Lemma \ref{prop:suff} below for a general condition for which the Bayes law holds.
\begin{defn}[Signal structure with finite information]\label{def:finite}
We say $\P_i(\cdot\given\theta)$ has finite information if for every Borel measurable $A\subseteq \R^d$ satisfying $\P(\theta\in A)\in (0,1)$ and conditionally independent signals $S_i\given \theta \sim \P_i(\cdot\given \theta)$, we have
\begin{equation}\label{eq:finite:info}
    \E\left|\log\left(\frac{\P(\theta\in A\given S_i)}{\P(\theta\in A^{c}\given S_i)}\right)\right|<\infty,
\end{equation}
where the expectation is over marginal distribution over $S_i$. We say $\{\P_{i}(\cdot \given \theta))\}_{i\in V}$ has finite information if for every agent $i\in V$, $\P_i(\cdot\given \theta)$ has finite information.
\end{defn}
The equation \eqref{eq:finite:info} can be interpreted as follows. Note that without conditioning on $S_i$, then the quantity in \eqref{eq:finite:info} is always finite for $A$ such that $\P(\theta \in A)\in (0,1)$. Thus, if further conditioning on $S_i$ typically does not introduce dramatic changes, then \eqref{eq:finite:info} is satisfied. We demonstrate in Section \ref{subsec:examples} that many natural examples satisfy Definitions \ref{def:bayes} and \ref{def:finite}. Under these conditions, we establish statistically efficient learning.

\begin{thm}[Statistically efficient learning]\label{thm:eff:learning}
Suppose that $\left\{\P_i(\cdot\given \theta)\right\}_{i\in V}$ satisfies the Bayes law (cf. Definition \ref{def:bayes}) and has finite information (cf. Definition \ref{def:finite}). Assuming that the graph $G$ is connected, in the limit, every agent learns as well as if they learned all
agents' private information. That is, the common limiting posterior $\nu(\infty)\equiv \nu_i(\infty)$ coincides with $\law(\theta\given S_1, \ldots, S_n)$, almost surely.
\end{thm}
We prove Theorem \ref{thm:eff:learning} in Section \ref{sec:proof} based on a delicate argument using the Bayes law in Definition \ref{def:bayes} and Chebyshev's sum inequality. The assumption regarding finite information (cf. Definition \ref{def:finite}) is needed to guarantee that the random variables that appear in the argument has well-defined expectation. 
\subsubsection{Examples of signal structures with finite information}
\label{subsec:examples}

\begin{eg}
    Suppose that the signals are supported on a finite set, i.e. $\P_i(\cdot \given \theta)$ is supported on $\mathcal{S}_i$ for any $i\in V$. Then, the signal structure $\{\P_i(\cdot \given \theta)\}_{i\in V}$ satisfied the Bayes law and has finite information. Indeed, as mentioned above, since $\mathcal{S}_i$ is discrete, the signal structure satisfies Bayes law. Further, note that since the support is finite for any $i\in V$, there exists constants $c_i,C_i>0$ such that $\P(S_i=s_i\given \theta) \in [c_i,C_i]$ holds for any $s_i$ in the support. Thus, for every $A\subseteq \R^{d}$ such that $\P(\theta \in A)\in (0,1)$, it follows that
    \begin{equation*}
\frac{\P(\theta \in A \given S_i=s_i)}{\P(\theta \in A)}=\frac{\int_{\theta \in A}\P(S_i=s_i\given \theta)\nu(\rd \theta)}{\P(S_i=s_i)\P(\theta \in A)}\in [c_i,C_i]\,.
\end{equation*}
Thus, we have that 
\begin{equation*}
 \E\left|\log\left(\frac{\P(\theta\in A\given S_i)}{\P(\theta\in A^{c}\given S_i)}\right)\right|\leq  \E\left|\log\left(\frac{\P(\theta\in A)}{\P(\theta\in A^{c})}\right)\right|+\log\left(\frac{C_i}{c_i}\right)<\infty\,.
\end{equation*}
Therefore, if $\P_i(\cdot\given \theta)$ is supported on a finite set for every $i\in V$, it also has finite information (cf. Definition~\ref{def:finite}), thus the assumptions of Theorem \ref{thm:eff:learning} is satisfied.
\end{eg}
We now consider the examples where the signals $S_i$ may be continuous. In Appendix \ref{sec:omittedproofs}, we prove the following sufficient condition. We first recall the definition of $\sigma$-finite regular Borel measures (see e.g. Definition 2.5.12 of \cite{BeCz09}).
\begin{lemma}\label{prop:suff}
Suppose that there exist $\sigma$-finite regular Borel measures on $\R^d$, $\left\{\mu_i\right\}_{i \in V}$, such that the $\P_i(\cdot\given \theta)$ is absolutely continuous with respect to $\mu_i$ for every $\theta \in \supp(\nu)$ and $i\in V$. Then, $\left\{\P_i(\cdot\given \theta)\right\}_{i\in V}$ satisfies the Bayes law in Definition \ref{def:bayes}. Moreover, in this case, the posterior $\law(\theta\given S_i)$ is absolutely continuous with respect to the prior $\nu$, $\P_{S_i}$-a.s. for $i\in V$, where $\P_{S_i}$ denote the marginal law of $S_i$. Denote $p_i(x\given S_i)$ by its the Radon-Nikodym derivative. Suppose that the following condition is further satisfied: for every $i\in V$, 
\begin{equation}\label{eq:condition:integrable}
    \E_{S_i}\int_{\R^{d}}\Big|\log \left(p_i\left(x\given S_i\right)\right)\Big| \nu (\rd x) <\infty.
\end{equation}
Then $\{\P_i(\cdot\given \theta)\}_{i\in V} $ has finite information per Definition \ref{def:finite}.
\end{lemma}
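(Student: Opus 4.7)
My plan is to work with Radon-Nikodym densities and apply Bayes' rule pointwise. Under the absolute continuity hypothesis, each $\P_i(\cdot|\theta)$ admits a density $p_i(s|\theta) := d\P_i(\cdot|\theta)/d\mu_i(s)$, and the $\sigma$-finiteness and regularity of $\mu_i$ guarantee the existence of regular conditional distributions so that Bayes' theorem applies pointwise without measurability issues. Writing $Z_i(S_i) := \int p_i(S_i|\theta)\,\nu(d\theta)$ and $Z(S_1,\ldots,S_n) := \int \prod_i p_i(S_i|\theta)\,\nu(d\theta)$, which are a.s.\ positive since they are the marginal densities of $S_i$ and $(S_1,\ldots,S_n)$ with respect to $\mu_i$ and $\prod_i\mu_i$, the conditional independence of $(S_i)$ given $\theta$ yields explicit posterior-to-prior densities
\begin{equation*}
p_i(\theta|S_i) := \frac{d\law(\theta|S_i)}{d\nu}(\theta) = \frac{p_i(S_i|\theta)}{Z_i(S_i)}, \qquad \frac{d\law(\theta|S_1,\ldots,S_n)}{d\nu}(\theta) = \frac{\prod_i p_i(S_i|\theta)}{Z(S_1,\ldots,S_n)}.
\end{equation*}
The first identity already proves that $\law(\theta|S_i)\ll\nu$ with density $p_i(\theta|S_i)$, settling the middle assertion of the lemma.

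For the Bayes law \eqref{eq:bayeslaw}, I multiply and divide the joint posterior density by $\prod_i Z_i(S_i)$ to obtain
\begin{equation*}
\frac{d\law(\theta|S_1,\ldots,S_n)}{d\nu}(\theta) = f(S_1,\ldots,S_n)\cdot \prod_{i=1}^{n} p_i(\theta|S_i), \qquad f(S_1,\ldots,S_n) := \frac{\prod_i Z_i(S_i)}{Z(S_1,\ldots,S_n)}>0,
\end{equation*}
which is \eqref{eq:bayeslaw} read at the density level (i.e., identifying $p_i(\theta|S_i)$ with the infinitesimal ratio $\P(\theta\in d\theta|S_i)/\nu(d\theta)$, which produces the $\P(\theta\in A)^{n-1}=\nu(A)^{n-1}$ factor in the denominator of \eqref{eq:bayeslaw}). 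The key structural fact is that $f$ depends only on $(S_1,\ldots,S_n)$ and not on $\theta$; this is a direct consequence of the factorization $\prod_i p_i(S_i|\theta)$ of the joint likelihood via conditional independence.

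For the finite-information claim, I bound $\bigl|\log(\P(\theta\in A|S_i)/\P(\theta\in A^c|S_i))\bigr| \leq |\log\P(\theta\in A|S_i)| + |\log\P(\theta\in A^c|S_i)|$. Hypothesis \eqref{eq:condition:integrable} implies $p_i(\theta|S_i)>0$ for $\nu$-a.e.\ $\theta$ (else its left-hand side would be infinite), so $\P(\theta\in A|S_i)\in(0,1)$ a.s.\ and each absolute value equals minus the logarithm. Jensen's inequality applied to the concave function $\log$ gives
\begin{equation*}
-\log\P(\theta\in A|S_i) = -\log\!\Big(\nu(A)\cdot\tfrac{1}{\nu(A)}\!\int_A p_i(\theta|S_i)\,\nu(d\theta)\Big) \leq -\log\nu(A) + \tfrac{1}{\nu(A)}\!\int_A |\log p_i(\theta|S_i)|\,\nu(d\theta).
\end{equation*}
Taking expectations over $S_i$, swapping order of integration by Tonelli, and invoking \eqref{eq:condition:integrable} yields a bound of $-\log\nu(A) + \nu(A)^{-1}\cdot C<\infty$, where $C := \E_{S_i}\int|\log p_i(\theta|S_i)|\,\nu(d\theta)<\infty$ by assumption and $\nu(A)\in(0,1)$. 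The symmetric estimate with $A^c$ in place of $A$ completes the argument. The only subtlety anywhere in the proof is the appeal to regular conditional distributions—guaranteed by the $\sigma$-finiteness and regularity of $\mu_i$—to legitimize the pointwise Bayes formulas and the Tonelli swap; beyond that, the rest is routine manipulation.
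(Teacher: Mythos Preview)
Your finite-information argument via Jensen's inequality, and your derivation of $\law(\theta\mid S_i)\ll\nu$ from classical Bayes' theorem, match the paper's proof (its Lemma~\ref{lem:finite}) essentially verbatim and are correct.

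For the Bayes-law part you take a different route from the paper: you manipulate Radon--Nikodym densities directly, whereas the paper works with conditional probabilities given the events $\{S_i\in B_\eps(x_i)\}$ and passes to the limit $\eps\to 0$ via the Lebesgue differentiation theorem. Your density-level identity
\[
\frac{d\law(\theta\mid S_1,\dots,S_n)}{d\nu}(\theta)=f(S_1,\dots,S_n)\cdot\prod_{i=1}^n p_i(\theta\mid S_i)
\]
is correct and is arguably cleaner than the paper's approach. However, there is a genuine gap in your final step. Integrating this identity over a set $A$ gives
\[
\P(\theta\in A\mid S_1,\dots,S_n)=f(S)\int_A\prod_{i=1}^n p_i(\theta\mid S_i)\,\nu(d\theta),
\]
whereas \eqref{eq:bayeslaw} demands that this equal $f(S)\cdot\prod_i\int_A p_i(\theta\mid S_i)\,\nu(d\theta)\big/\nu(A)^{n-1}$. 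These two expressions coincide only when the normalised average of a product equals the product of normalised averages, which fails in general---already for two independent Gaussian signals and $A=(-\infty,0]$ one can check numerically that the two sides differ. Your parenthetical remark ``read at the density level'' glosses over exactly this point: the density identity is the infinitesimal version of \eqref{eq:bayeslaw}, but it does not integrate up to the set-level statement with an $A$-independent $f$. (The paper's own argument has a parallel defect: it factors $\P(S_1\in B_1,\dots,S_n\in B_n\mid\theta\in A)$ as $\prod_i\P(S_i\in B_i\mid\theta\in A)$, which would require conditional independence given the \emph{event} $\{\theta\in A\}$ rather than given the random variable $\theta$.) Your density-level factorisation is in fact the structurally correct statement; the difficulty lies in the formulation of \eqref{eq:bayeslaw} itself rather than in your method.
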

In several examples the integrability condition \eqref{eq:condition:integrable} is easy to verify as seen below.

\begin{eg}\label{eg:gaussian}
    In Appendix \ref{sec:gaussian}, we consider the Gaussian model, where the prior is Gaussian $\nu=\normal(\xi,\Sigma)$ for some $\xi\in \R^{d}$ and p.s.d. matrix $\Sigma\in \R^{d\times d}$, and the signal structure given by $\P_i(\cdot\given \theta)= \normal(\langle a_i,\theta\rangle, \Sigma_i)$, where $a_i\in \R^{d}$. This Gaussian model satisfies the condition for Lemma \ref{prop:suff} with $\mu_i\equiv \mu, i \in V$, where $\mu$ is the Lebesgue measure on $\R^{d}$ since we can simply bound
\begin{equation*}
    \Big|\log\left(p_i(x \given S_i)\right)\Big|=\left|\log\frac{\rd \law(\theta\given S_i)}{\rd \mu}(x)-\log\frac{\rd\nu}{\rd \mu}(x)\right|\leq C\big(\|x\|_2^2+\|S_i\|_2^2\big),
\end{equation*}
where $C>0$ is a constant depending only on $a_i$. Thus, the condition \eqref{eq:condition:integrable} holds, so by Lemma \ref{prop:suff}, the Gaussian signal structure has finite information and satisfies the Bayes law. 
\end{eg}

\begin{eg}\label{eg:lr:bounded}
Another example satisfying the integrability condition \eqref{eq:condition:integrable} is given as follows. Suppose that $\theta\in \{0,1\}$ is binary with $p_x:= \P(\theta=x)>0$ for $z\in (0,1)$, and assume that $\mathbb{P}_i(\cdot\mid\theta)$ has density $q_i(x\mid\theta)$ with respect to a $\sigma$-finite Borel measure $\mu(dx)$. By a classical Bayes law (see e.g. Theorem 1.31 in \cite{Schervish95}),
\begin{equation*}
    p_i(x\given S_i)= \frac{q_i(S_i\given x)}{q_i(S_i\given 0)p_0+q_i(S_i \given 1)p_1}\,.
\end{equation*}
Thus, if we denote the likelihood ratio by $L_i(s)\equiv \frac{q_i(s\mid 1)}{q_i(s\mid 0)}$, then \eqref{eq:condition:integrable} is equivalent to
\begin{equation*}
    \mathbb{E} \Big[L_i(S_i)\vee L_i(S_i)^{-1}\Big]<\infty\,.
\end{equation*} 
Thus, if the likelihood ratio $L_i(x)$ is bounded away from $0$ and $\infty$, then the signal structure has finite information by Lemma \ref{prop:suff}. Thus, in this case, the conditions of Theorem \ref{thm:eff:learning} is satisfied.
\end{eg}

\subsection{Bernoulli sampling model}
\label{subsec:Bernoulli}
In this section, we consider a variant of the sampling model, which we call \textit{Bernoulli sampling model} as follows. Let $d=1$ and let the unknown state $\theta\in [0,1]$ drawn from the uniform prior $\theta \in \textnormal{Unif}([0,1])$.

At time $t=0$, agent $i\in V$ obtain initial private signal $S_i\sim \Ber(\theta)$ independently, i.e. flipping a coin with probability $\theta$. Then, at time steps $t\geq 1$, each agent $i\in V$ exchanges information with their neighbors $\partial i\subseteq V$ as follows. Denote $\nu_i(1):=\law (\theta \given S_i)$ and $\KK_i(1):= \sigma(S_i)$. At time $t=1$, agent $i$ samples $Y_i(1)\sim \nu_i(1)$ given $\KK_i(1)$, conditionally independent of the other agents, and further samples $X_i(1)\sim \Ber(Y_i(1))$, conditionally independent given other random variables. Then, agent $i$ communicates $X_i(1)$ to all of its neighbor $j\in \partial i$. More generally, at time $t\in \N$, denote the information agent $i$ has seen at time $t$ by the $\sigma$-algebra
\begin{equation*}
    \KK_i(t):= \sigma(S_i,(X_j(s))_{j\in \partial i, s<t})
\end{equation*}
Then, at time $t$, agent $i$ samples $Y_{i}(t) \sim \nu_{i}(t):=\law(\theta\given \KK_i(t))$ and $X_i(t) \sim \Ber\big(Y_i(t)\big)$ given $\KK_i(t)$, conditionally independent of all other information, and communicates $X_i(t)$ to its neighbors. 

Thus, compared to the sampling model, agents only communicate $\{0,1\}$ valued information $X_i(t)$ instead of actual sample $Y_i(t)$ from the posterior by flipping a coin with probability $Y_i(t)$. In this Bernoulli sampling model, since agents only communicate $X_i(t)\sim \Ber (Y_i(t))$ instead of $Y_i(t)$, the best hope is for the agents to learn the mean of the optimal Bayes posterior $\E[\theta\mid S_1,\ldots, S_n]$. More precisely, denote the posterior mean of agent $i$ at time $t\geq 0$ by
\begin{equation*}
m_i(t):= \E\big[\theta\,\big|\,\KK_i(t)\big]\,.
\end{equation*}
Then, we establish the agreement and efficient statistical learning of the posterior means. As before, we assume without loss of generality that the network $G$ is connected since otherwise, we can apply our result for every connected component of $G$. 
\begin{thm}\label{thm:bernoulli:sampling}
Under the Bernoulli sampling model on the connected graph $G$ as above. Then, for every agent $i\in V$, the (random) posterior mean $m_i(t)$ converges as $t\to\infty$ to $m_i(\infty)$, almost surely and in $L^1$. Furthermore, the limiting posterior mean $m_i(\infty)$ is identical for every agent $i\in V$, which is given by the mean of the optimal Bayes posterior: we have almost surely that for every $i\in V$,
\[
m_i(\infty) = \E[\theta\mid S_1,\ldots, S_n]\,.
\]
\end{thm}
\subsection{Misspecified Bernoulli sampling model}
\label{subsec:misspecified}
Theorems \ref{thm:eff:learning} and \ref{thm:bernoulli:sampling} crucially rely on the fact that each agent knows which agents they communicate with and further more, know how those agents compute their distributions based on their initial signal and previous rounds of communication. 
It is natural to ask if the results still hold if agents are more naive, and update their posterior assuming each round of communication is drawn from the original signals.
As mentioned in the introduction this is partially motivated by AI agents, such as LLMs who may not be aware that some of their training data is generated by other LLMs. 

One might ask the following question:

\textit{Do similar learning results hold if agents assume that the communication they are seeing are just indpendent samples from the signal distribution?}

As it turns out, to achieve statistically efficient learning as in Theorem \ref{thm:eff:learning} and Theorem \ref{thm:bernoulli:sampling}, it is indeed important that agents know how the communication that they observe is generated. 

To demonstrate this, we consider the \textit{misspecified Bernoulli sampling model} as follows. For simplicity, assume that the underlying graph $G$ is an edge, i.e. $V=\{1,2\}$ and the agents $1$ and $2$ are connected. As before, at time $t=0$, agent $i\in V$ obtain their signals $S_i \sim \Ber(\theta)$ independently given $\theta$. At time steps $t\geq 1$, however, agent $1$ and $2$ communicates as follows. Agent $1$ thinks that agent $2$ is sending information that are i.i.d. $\Ber(\theta)$. That is, for i.i.d. $\Ber(\theta)$ random variables $Z_1,\ldots, Z_m$ given $\theta$, if we denote 
\begin{equation}\label{eq:posterior:beta}
\wt{p}(\theta\given z_1,\ldots z_m):= \law(\theta \given Z_1=z_1,\ldots Z_m=z_m)\,,\quad z_1,\ldots z_m\in \{0,1\}\,,
\end{equation}
 then at time $t\geq 1$, agent $1$ samples $\wt{Y}_{1}(t)\sim \wt{p}\big(\theta\given S_{1},(\wt{X}_{2}(s))_{s<t}\big)$ conditionally independent given other random variables, and communicates $\wt{X}_1(t)\sim \Ber(\wt{Y}_1(t))$ to agent $2$. Analogously, agent $2$ at time $t$ samples $\wt{Y}_2(t)\sim \wt{p}\big(\theta\given S_2, (\wt{X}_1(s))_{s<t}\big)$ conditionally independent given other random variables, and communicates $\wt{X}_2(t)\sim \Ber(\wt{Y}_2(t))$ to agent $1$. Under this misspecified Bernoulli sampling model, consider the (misspecified) posteriors 
\[
\wt{\nu}_1(t):=\wt{p}\big(\theta\given S_{1},(\wt{X}_{2}(s))_{s<t}\big)\,,\quad~~\wt{\nu}_2(t):=\wt{p}\big(\theta\given S_2, (\wt{X}_1(s))_{s<t}\big)\,.
\]
We show that $\wt{\nu}_1(t)$ and $\wt{\nu}_2(t)$ agree asymptotically, however, on a point mass, where the point mass does not match the mean of the optimal Bayes posterior \textit{almost surely}. Thus, in this model, the agents are \textit{overconfident} about their beliefs, which differs from the optimal Bayes posterior.

\begin{prop}\label{prop:misspecified:bernoulli}
Consider the misspecified Bernoulli sampling model as above. Then, as $t\to\infty$, we have almost surely for $i=1,2,$ that
\[
\wt{\nu}_i(t)\stackrel{w}{\longrightarrow}\delta_{M(\infty)}\,,
\]
where $\stackrel{w}{\longrightarrow}$ denotes the weak convergence in $\PPP([0,1])$, and $M(\infty)\in [0,1]$ is a random variable that does not depend on the agents $i\in V$. Furthermore, $M(\infty)$ differs from $\E[\theta\given S_1,\ldots S_n]$ almost surely, i.e. 
\[
\P\big(M(\infty)\neq\E[\theta\given S_1,\ldots S_n])=1\,.
\]
\end{prop}
We remark that, interestingly, the misspecified Bernoulli sampling model can be considered as a variant of the classical P\'{o}lya urn model \cite{Polya31}. To see this, note that the misspecified posterior $\tilde{p}(\theta \given z_1,\ldots, z_m)$ in \eqref{eq:posterior:beta} is the beta distribution with parameters $\alpha= \sum_{i=1}^{m}Z_i+1$ and $\beta=m+1-\sum_{i=1}^{m}Z_i$ whose mean is given by $\frac{\sum_{i=1}^{m}Z_i+1}{m+2}$ (for details, see Lemma \ref{lem:beta}). Thus, conditional on the information $(S_1,\wt{X}_2(s))_{s<t}$ that agent $1$ has received up to time $t$, the message $X_1(t)$ that agent $1$ sends to agent $2$ at time $t$ is distributed as 
\[
X_1(t)\sim \Ber\Big(\frac{S_1+\sum_{s=1}^{t-1}\wt{X}_2(s)+1}{t+2}\Big)\,.
\]
Similarly, at time $t$, conditional on the past, $X_2(t)$ is distributed as $\Ber\big(\frac{S_2+\sum_{s=1}^{t-1}\wt{X}_1(s)+1}{t+2}\big)$. 

As a consequence, the evolution of the misspecified Bernoulli sampling model is the same as the following \textit{interacting urn system}, which might be of independent interest. Given $\theta\in \Unif([0,1])$, first draw $S_1,S_2\sim \Ber(\theta)$ independently. At time $t=0$, there are $2$ urns, each urn containing $S_i+1$ red balls and $2-S_i$ blue balls for $i=1,2$. At time $t=1$, we draw a ball from each urn simultaneously uniformly at random within the urn, i.e. with probability $(S_i+1)/3$, we draw a red ball from $i$'th urn, and otherwise we draw blue ball. Then, send the ball drawn from $i$'th urn to the other urn for $i=1,2$. We repeat this process for $t\geq 2$.

Although many generalizations of the P\'{o}lya urn model \cite{Polya31} have been studied in the literature (e.g. see \cite{Pemantle07} for a survey), we could not find previous works on this interacting urn system. Thus, Proposition \ref{prop:misspecified:bernoulli} interpreted as interacting urns may be interesting in its own right. The proof of Proposition \ref{prop:misspecified:bernoulli} applies idea  from the analysis of Bernard Friedman's urn model studied in \cite{Freedman65} as well as the stochastic approximation method~\cite{Pemantle90}

\section{Proofs}\label{sec:proof}
In this section, we prove our main results. In Section \ref{subsec:proof:agreement}, we prove Theorem \ref{thm:agreement}, while deferring some technical steps to Appendix \ref{sec:omittedproofs}. In Section \ref{subsec:proof:eff:learning}, we prove Theorem \ref{thm:eff:learning}. In Section \ref{subsec:proof:misspecified}, we prove Theorem \ref{thm:bernoulli:sampling} and Proposition \ref{prop:misspecified:bernoulli}. The proof of Lemma \ref{prop:suff} is also deferred to Appendix \ref{sec:omittedproofs}.

\subsection{Proof of Theorem \ref{thm:agreement}}
\label{subsec:proof:agreement}
The proof of the first claim of Theorem \ref{thm:agreement} regarding the convergence of the posterior $\nu_i(t)$ is an application of the martingale convergence theorem: for $i\in V$ and $t\in \N \cup\{\infty\}$, denote the distribution function of $\nu_i(t)$ by $F_{it}(u):= \P\left(\theta\leq u \given \HH_i(t)\right)$ for $u\in \R^{d}$. Here, we denoted $a\leq b$ for $a, b\in \R^{d}$ if coordinate-wise $a_i\leq b_i, 1\leq i\leq d$ holds. Note that by L\'{e}vy's upward theorem (see for e.g. Theorem 4.6.8 of \cite{Durrett19}), for fixed $u\in \R^{d}$, we have
\begin{equation*}
    F_{it}(u) \to F_{i\infty}(u):=\P\left(\theta \leq u \given \HH_i(\infty)\right)\textnormal{  as  } t\to \infty,\textnormal{ a.s. and in $L^1$}.
\end{equation*}
Since $\Q^{d}$ is countable, the convergence above holds simultaneously for $u\in \Q^{d}$: almost surely, we have
\begin{equation}\label{eq:convergence:distribution}
    F_{it}(u) \to F_{i\infty}(u)\textnormal{  as  } t\to \infty\textnormal{ for every $u\in \Q^{d}$}.
\end{equation}
Then, we can argue that on the a.s. event that \eqref{eq:convergence:distribution} happens, $\nu_i(t)\wto\nu_i(\infty)$ holds. We include the full detail in Appendix \ref{sec:omittedproofs} (see Proposition \ref{lem:genconvergence}). The proof of the second claim of Theorem \ref{thm:agreement} regarding agreement is more challenging. We use the following lemma whose proof is given in Appendix \ref{sec:omittedproofs}.
\begin{lemma} \label{lem:avghelps}
 Consider two random variables $Z_1, Z_2$, 
with finite second moments. If their average $Z(\lambda) = \lambda Z_1 + (1-\lambda)Z_2$ satisfies $\min_{\lambda\in[0,1]}  \E\left[Z(\lambda)^2\right] =  \E\left[Z_1^2\right]=
\E\left[Z_2^2\right]$ then $Z_1 = Z_2$ almost surely.
\end{lemma}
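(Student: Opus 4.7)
The plan is to expand $\E[Z(\lambda)^2]$ explicitly as a quadratic function of $\lambda$, use the hypothesis that $\E[Z_1^2]=\E[Z_2^2]$ to simplify it, and then extract the conclusion from the fact that its minimum over $[0,1]$ is attained at the endpoints.

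First, let $a := \E[Z_1^2] = \E[Z_2^2]$ and $b := \E[Z_1Z_2]$. A direct expansion gives
\begin{equation*}
\E[Z(\lambda)^2] = \lambda^2 a + 2\lambda(1-\lambda)b + (1-\lambda)^2 a = a + 2\lambda(1-\lambda)(b - a).
\end{equation*}
By Cauchy–Schwarz, $b \le \sqrt{a \cdot a} = a$, so the coefficient $(b-a)$ of $2\lambda(1-\lambda)$ is non-positive. Since $\lambda(1-\lambda) \ge 0$ on $[0,1]$ with strict inequality on $(0,1)$, the function $\lambda \mapsto \E[Z(\lambda)^2]$ is minimized on $[0,1]$ at $\lambda = 1/2$ (if $b < a$) with value $\frac{a+b}{2} < a$, and is constantly equal to $a$ (if $b = a$).

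The hypothesis forces the minimum to equal $a$, so we must be in the second case: $b = a$, i.e. $\E[Z_1 Z_2] = \E[Z_1^2] = \E[Z_2^2]$. Expanding
\begin{equation*}
\E[(Z_1 - Z_2)^2] = \E[Z_1^2] - 2\E[Z_1 Z_2] + \E[Z_2^2] = a - 2a + a = 0
\end{equation*}
then yields $Z_1 = Z_2$ almost surely, which is the desired conclusion.

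I do not foresee a real obstacle: the argument is a one-line application of Cauchy–Schwarz together with the observation that the quadratic $a + 2\lambda(1-\lambda)(b-a)$ has its minimum on $[0,1]$ strictly inside unless its coefficient vanishes. The only point that requires mild attention is that both boundary values $\E[Z_1^2]$ and $\E[Z_2^2]$ are assumed equal to the minimum, since otherwise one could make a trivial counterexample (e.g. $Z_1 = 0$, $Z_2$ arbitrary with positive variance): the hypothesis closes off exactly this loophole by forcing the quadratic to be flat.
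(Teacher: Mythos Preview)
Your proof is correct and follows essentially the same approach as the paper: expand $\E[Z(\lambda)^2]$ as a quadratic in $\lambda$, use the hypothesis to force the quadratic to be constant (equivalently $\E[Z_1Z_2]=\E[Z_1^2]=\E[Z_2^2]$), and conclude $\E[(Z_1-Z_2)^2]=0$. The only cosmetic difference is that you invoke Cauchy--Schwarz to get $b\le a$, whereas the paper argues directly that a (convex) quadratic achieving its minimum at both endpoints must be constant.
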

The proof of agreement is based on following idea: suppose that $F_{i\infty}(u)$ and $F_{j\infty}(u)$ disagree with positive probability for some $u\in \Q^d$. Then, by Lemma \ref{lem:avghelps}, there exists an average $\la F_{i\infty}(u)+(1-\la)F_{j\infty}(u)$ for some $0<\la<1$, which \emph{strictly} improve the mean squared error of $F_{i\infty}(u)$ for estimating $\one(\theta\leq u)$. On the other hand, after sufficiently long time $T$, agent $i$ can learn the empirical average $\frac{1}{T}\sum_{t=0}^{T-1}\one(Y_{j}(t)\leq u)$, which approximates $F_{j\infty}(u)$ in arbitrary precision because of the conditionally independent sampling mechanism and the convergence of $\nu_j(t)$ to $\nu_j(\infty)$. However, $F_{i\infty}(u)$ is the
posterior mean of $\one(\theta\leq u)$ under $\HH_i(\infty)$ and therefore must
be the best (i.e. lowest mean squared error) estimate of $\one(\theta\leq u)$, which 
 yields a contradiction. The full detail is given below.
\paragraph{Proof of Theorem \ref{thm:agreement}}
For simplicity, abbreviate $F_{i}(u)\equiv F_{i\infty}(u)=\P\left(\theta \leq u \given \HH_i(\infty)\right)$. Then, it suffices to show that for $u\in \Q^{d}$, $F_{i}(u)=F_{j}(u)$ almost surely. Having Lemma \ref{lem:avghelps} in mind, denote 
$Z_1:=F_i(u)-\one\left(\theta \leq u\right)\textnormal{ and }Z_2:=F_j(u)-\one\left(\theta \leq u\right)$,
which are bounded random variables. We now aim to show $\min_{\la\in [0,1]}\E\left(\la Z_1+(1-\la)Z_2\right)^2\geq \E Z_1^2 $. Note that if we establish this, then we have that by symmetry $\min_{\la\in [0,1]}\E\left(\la Z_1+(1-\la)Z_2\right)^2\geq \E Z_2^2$ holds, so by Lemma \ref{lem:avghelps}, $Z_1=Z_2$ almost surely. To this end, assume by contradiction that there exists some $\la\in[0,1]$ and $\eps>0$ such that
\begin{equation}\label{eq:contradiction}
\E\left[\left(\la F_{i}(u)+(1-\la)F_j(u)-\one(\theta \leq u)\right)^2\right]<\E\left[\left(F_{i}(u)-\one(\theta\leq u)\right)^2\right]-\eps. 
\end{equation}
Then, we argue that we can find a $\HH_{i}(\infty)$-measurable estimator of $\one(\theta \leq u)$ such that it is unbiased and has mean squared error smaller than $\E\left[\Var\left(\one(\theta\leq u)\given \HH_i(\infty)\right)\right]=\E\left(F_{i}(u)-\one(\theta\leq u)\right)^2$, which is a contradiction. Indeed, for $T\geq 1$, define
\begin{equation}\label{eq:def:Z1}
    Z_1(T):=\la F_i(u)+\frac{1-\la}{T}\sum_{t=1}^{T}\one\left(Y_{j}(t)\leq u\right).
\end{equation}
Then, because agent $i$ receives from agent $j$ the information $Y_j(t)$ at time $t$, $Z_1(T)$ is $\HH_i(\infty)$-measurable. Moreover, it is unbiased since by tower property, 
\begin{equation*}
    \E\left[Z_1(T)\right]
    =\la \P(\theta \leq u)+\frac{1-\la}{T}\sum_{t=1}^{T}\E\Big[\P\Big(Y_j(t)\leq u\given \HH_j(t)\Big)\Big]=\P(\theta \leq u).
\end{equation*}
Thus, 
\begin{equation}\label{eq:min:var}
    \E\Big[\big(Z_1(T)-\one(\theta\leq u)\big)^2\Big]\geq \E\left[\left(F_{i}(u)-\one(\theta\leq u)\right)^2\right].
\end{equation}
We now show that \eqref{eq:min:var} contradicts \eqref{eq:contradiction} for large enough $T$: note that
\begin{equation*}
    Z_1(T)=\la F_i(u)+(1-\la)F_j(u)+\frac{1-\la}{T}\sum_{i=1}^{T}\Big(\one\left(Y_{j}(t)\leq u\right)-F_{jt}(u)\Big)+\frac{1-\la}{T}\sum_{i=1}^{T}\Big(F_{jt}(u)-F_j(u)\Big).
\end{equation*}
Thus, Minkowski's inequality shows that $\E\Big[\big(Z_1(T)-\one(\theta\leq u)\big)^2\Big]^{1/2}\leq \Xi_1+\Xi_2+\Xi_3$, where
\begin{equation*}
\begin{split}
\Xi_1:&=\E\left[\left(\la F_{i}(u)+(1-\la)F_j(u)-\one(\theta \leq u)\right)^2\right]^{1/2}\\
\Xi_2:&=\frac{1-\la}{T}\E\left[\left(\sum_{i=1}^{T}\Big(\one\left(Y_{j}(t)\leq u\right)-F_{jt}(u)\Big)\right)^2\right]^{1/2}\\
\Xi_3:&=\frac{1-\la}{T}\E\left[\left(\sum_{i=1}^{T}\Big(F_{jt}(u)-F_j(u)\Big)\right)^2\right]^{1/2}.
\end{split}
\end{equation*}
Recall that $\Xi_1\leq \left(\E\left(F_{i}(u)-\one(\theta\leq u)\right)^2-\eps\right)^{1/2}$ by \eqref{eq:contradiction}. To this end, we aim to bound $\Xi_2$ and $\Xi_3$. To start with $\Xi_2$, note that $\{\one\left(Y_{j}(t)\leq u\right)-F_{jt}(u)\}_{t\geq 0}$ are uncorrelated at different times by the sampling mechanism of the agents. That is, for $0\leq t_1<t_2$, we have by tower property that
\begin{equation*}
\begin{split}
    &\E\Big[\Big(\one\left(Y_{j}(t_1)\leq u\right)-F_{jt_1}(u)\Big)\Big(\one\left(Y_{j}(t_2)\leq u\right)-F_{jt_2}(u)\Big)\Big]\\
    &=\E\bigg[\E\Big[\Big(\one\left(Y_{j}(t_1)\leq u\right)-F_{jt_1}(u)\Big)\Big(\one\left(Y_{j}(t_2)\leq u\right)-F_{jt_2}(u)\Big)\given \HH_j(t_2)\Big]\bigg]=0.
\end{split}
\end{equation*}
Thus, we can calculate and bound $\Xi_2$ by
\begin{equation}\label{eq:bound:B}
\Xi_2=\frac{1-\la}{T}\E\left[\sum_{t=1}^{T}\Big(\one\left(Y_{j}(t)\leq u\right)-F_{jt}(u)\Big)^2\right]^{1/2}\leq\frac{1}{\sqrt{T}}.
\end{equation}
For $\Xi_3$, recall that by L\'{e}vy's upward theorem that $\lim_{t\to\infty}F_{jt}(u)=F_{j}(u)$ almost surely. Since $F_{jt}(u), t\geq 0$ are bounded, dominated convergence theorem shows $\lim_{t\to\infty}\E\big((F_{jt}(u)-F_{j}(u)\big)^2=0$. Thus, for every $\delta>0$, there exists $T_0(\delta)$ such that if $t\geq T_0(\delta)$, then $\E\big((F_{jt}(u)-F_{j}(u)\big)^2<\delta$. Thus, Minkowski's inequality shows
\begin{equation}\label{eq:bound:C}
\begin{split}
    \Xi_3&\leq\frac{1-\la}{T}\E\left[\left(\sum_{i=1}^{T_0}\Big(F_{jt}(u)-F_j(u)\Big)\right)^2\right]^{1/2}+\frac{1-\la}{T}\sum_{t=T_0+1}^{T}\E\left[\left(F_{jt}(u)-F_j(u)\right)^2\right]^{1/2}\\
    &\leq \frac{T_0(\delta)}{T}+\delta^{1/2}.
\end{split}
\end{equation}
Recalling the bound $\E\Big[\big(Z_1(T)-\one(\theta\leq u)\big)^2\Big]^{1/2}\leq \Xi_1+\Xi_2+\Xi_3$, \eqref{eq:contradiction}, \eqref{eq:bound:B}, and  \eqref{eq:bound:C} show that
\begin{equation*}
    \E\Big[\big(Z_1(T)-\one(\theta\leq u)\big)^2\Big]^{1/2}\leq \left(\E\left[\left(F_{i}(u)-\one(\theta\leq u)\right)^2\right]-\eps\right)^{1/2}+\frac{1}{\sqrt{T}}+\frac{T_0(\delta)}{T}+\delta^{1/2}.
\end{equation*}
Thus, first taking $\delta>0$ sufficiently small compared to $\eps$ and then taking $T$ to be large enough, the equation above contradicts \eqref{eq:min:var}. This concludes the proof of Theorem \ref{thm:agreement}.

\subsection{Proof of Theorem \ref{thm:eff:learning}}
\label{subsec:proof:eff:learning}
The proof of Theorem \ref{thm:eff:learning} is 
based on a proof for the signal case in~\cite{MoSlTa:11}.
The paper ~\cite{MoSlTa:11} deals with The Economics signal model and it assumes a binary state. Here we are dealing with a sampling model.
Another major 
difference is that we have to deal with arbitrary prior distribution $\nu$, and in particular $\nu$ can have \emph{unbounded} support. To deal with this issue, we consider log-likelihood ratio for estimating $\one(\theta\in A)$, and apply Bayes rule as suggested in Definition \ref{def:bayes}.

\paragraph{Proof of Theorem \ref{thm:eff:learning}}
We claim that for every Borel measurable $A\subseteq \R^{d}$, $\P\left(\theta \in A\given \HH_i(\infty)\right)=\P(\theta\in A\given S_1,..,S_n)$ holds almost surely, which obviously achieves our goal by considering sets of the form $A=\prod_{i=1}^{d}(-\infty, q_i]$ for $q=(q_1,..,q_d)\in \Q^{d}$. To this end, fix $A$ such that $\P(\theta\in A) \in (0,1)$ since otherwise, our claim trivially holds. First, observe that using Bayes law \eqref{eq:bayeslaw} twice for $A$ and $A^{c}$ and dividing the two equations to eliminate $f$ gives
\begin{equation}\label{eq:crucial}
\frac{\P(\theta\in A\given S_1,...,S_n)}{\P(\theta\in A^{c}\given S_1,...,S_n)}=\prod_{i=1}^{n}\frac{\P(\theta\in A\given S_i)}{\P(\theta\in A^c\given S_i)}\cdot\left(\frac{\P(\theta\in A^c)}{\P(\theta\in A)}\right)^{n-1}.
\end{equation}
Note that in \eqref{eq:crucial}, the denominators may be zero which is problematic. However, our assumption that the signal structure has finite information, i.e. \eqref{eq:finite:info}, assures that $\frac{\P(\theta\in A\given S_i)}{\P(\theta\in A^c\given S_i)}\in (0,\infty)$ holds a.s.. Note that this also implies that $\P(\theta\in A^{c}\given S_1,...,S_n)>0$ almost surely because $f(S_1,...,S_n)>0$ holds in \eqref{eq:bayeslaw}. To this end, let
\begin{equation*}
    Z:= \sum_{i=1}^{n} Z_i\quad\textnormal{where}\quad Z_i :=\log\frac{\P(\theta\in A\given S_i)}{\P(\theta\in A^c\given S_i)}\in (-\infty,+\infty).
\end{equation*}
Thus, by \eqref{eq:crucial}, we have
\begin{equation}\label{eq:logit}
    W:=\P(\theta\in A\given S_1,...,S_n)= \frac{\beta e^Z}{1+\beta e^Z},\quad\textnormal{where}\quad \beta=\left(\frac{\P(\theta\in A^c)}{\P(\theta\in A)}\right)^{n-1}>0.
\end{equation}
Now, denote $X:=\P\left(\theta \in A\given \HH_i(\infty)\right)$. Then, by tower property, we have
\begin{equation*}
    X=\E\left[\P\Big(\theta\in A\given S_1,...,S_n,\HH_i(\infty)\Big)\bgiven \HH_i(\infty)\right]=\E\left[\P\Big(\theta\in A\given S_1,...,S_n\Big)\bgiven \HH_i(\infty)\right],
\end{equation*}
where the second equality holds because conditional on the private signals $S_1,..,S_n$, further conditioning on $\HH_i(\infty)$ does not change the posterior. Thus, we have
\begin{equation}\label{eq:X:W}
 X=\E\left[W \given \HH_i(\infty)\right]=\E\left[W \given X\right],
\end{equation}
where the second inequality is due to tower property. Note that $Z_i$ is $\HH_i(\infty)$-measurable and our assumption \eqref{eq:finite:info} guarantees that $Z_i$ is integrable for $i\leq n$. Thus, we have by tower property,
\begin{equation*}
    \E\left[Z_i W\given X\right]=\E\left[\E\left[Z_i W\given \HH_i(\infty)\right]\bgiven X\right]=\E\left[Z_i \E\left[W\given \HH_i(\infty)\right]\bgiven X\right]=\E\left[Z_i\given X\right]\cdot\E\left[W\given X\right],
\end{equation*}
where the last equality holds because of \eqref{eq:X:W}. Summing the equation above for $i=1,...,n$ and plugging in the expression \eqref{eq:logit} for $W$ gives
\begin{equation}\label{eq:chebyshev}
    \E\left[Z\cdot\frac{\beta e^{Z}}{1+\beta e^Z}\;\bigg|\; X\right]=\E\left[Z\given X\right]\cdot \E\left[\cdot\frac{\beta e^{Z}}{1+\beta e^Z}\;\bigg|\; X\right].
\end{equation}
Now, observe that $x \to \frac{\beta e^x}{1+\beta e^x}$ is strictly increasing. Thus, \eqref{eq:chebyshev} gives the equality condition for Chebyshev's sum inequality, which implies $Z=Z(X)$ is $\sigma(X)$-measurable. Thus, $W=\frac{\beta e^{Z}}{1+\beta e^Z}$ is $\sigma(X)$-measurable and by \eqref{eq:X:W}, $X=W$ holds a.s.. Therefore, $\P(\theta\in A\given S_1,...,S_n)=\P\left(\theta\in A\given \HH_i(\infty)\right)$ holds a.s., which concludes the proof of Theorem \ref{thm:eff:learning}.

\subsection{Proof of Theorem \ref{thm:bernoulli:sampling} and Proposition \ref{prop:misspecified:bernoulli}}
\label{subsec:proof:misspecified}
In this section, we prove Theorem \ref{thm:bernoulli:sampling} and Proposition \ref{prop:misspecified:bernoulli}. Although Theorem \ref{thm:bernoulli:sampling} is based on similar ideas used for the proof of Theorems \ref{thm:agreement} and \ref{thm:eff:learning}, Proposition \ref{prop:misspecified:bernoulli} requires a new idea. 

First, we show that $\wt{p}(\theta\given z_1\ldots, z_m)$, defined in \eqref{eq:posterior:beta}, is the beta distribution.
\begin{lemma}\label{lem:beta}
Let $\theta\in \Unif([0,1])$ and given $\theta$, suppose $Z_1,\ldots,Z_m$ are i.i.d. $\Ber(\theta)$. Then, the conditional distribution of $\theta$ given $Z_1,\ldots,Z_n$ is $\textnormal{Beta}\left(\sum_{i=1}^{m}Z_i+1,m+1-\sum_{i=1}^{m}Z_i\right)$. In particular, we have
\begin{equation}\label{eq:beta}
\E[\theta\given Z_1,\ldots, Z_m]=\frac{\sum_{i=1}^{m}Z_i+1}{m+2}\,.
\end{equation}
\end{lemma}
\begin{proof}
By Bayes rule, we have
\[
p(\theta\given Z_1,\ldots, Z_m)\propto \prod_{i=1}^{m}p(Z_i\given \theta)\cdot p(\theta)=\theta^{\sum_{i=1}^{m}Z_i}(1-\theta)^{m-\sum_{i=1}^{m}Z_i}\one\big(\theta\in [0,1]\big)\,.
\]
Thus, the conditional distribution of $\theta$ given $Z_1,\ldots,Z_n$ is $\textnormal{Beta}\left(\sum_{i=1}^{m}Z_i+1,m+1-\sum_{i=1}^{m}Z_i\right)$. Then, it is well-known that for $X\sim \textnormal{Beta}(\alpha,\beta)$, $\E X=\alpha/(\alpha+\beta)$ holds, thus \eqref{eq:beta} follows.
\end{proof}

\paragraph{Proof of Theorem \ref{thm:bernoulli:sampling}}
We first prove the a.s. and $L^1$ convergence of $m_i(t)$. Since $\theta$ is supported on $[0,1]$ and $\{\KK_i(t)\}_{t\geq 1}$ is increasing $\sigma$-algebra, $\{m_i(t)\}_{t\geq 1}$ is a Doob martingale. Thus, if we denote $\KK_i(\infty):=\cup_{t\geq 1}\KK_i(t)$, then by L\'{e}vy's upward theorem (see e.g.~\cite[Theorem 4.6.8]{Durrett19}), $m_i(t)\to m_i(\infty):=\E[\theta\given \KK_i(\infty)]$, a.s. and in $L^1$.

Next, we prove that $m_i(\infty)= m_j(\infty)$, a.s. if $i$ and $j$ are neighbors. We follow a similar strategy as in the proof of Theorem \ref{thm:agreement}. Suppose by contradiction that $\P\big(m_i(\infty)\neq m_j(\infty)\big)>0$. Without loss of generality, assume that $\E(m_i(t)-1/2)^2\leq \E(m_j(t)-1/2)^2$. Then, Lemma \ref{lem:avghelps} applied to $Z_1=m_i(t)-1/2$ and $Z_2=m_j(t)-1/2$ shows that there exists some $\lambda\in [0,1]$ and $\eps>0$ such that
\begin{equation}\label{eq:X:contradiction}
\E\left[\left(\la m_i(t)+(1-\la)m_j(t)-\frac{1}{2}\right)^2\right]<\E\left[\left(m_i(t)-\frac{1}{2}\right)^2\right]-\eps\,.
\end{equation}
Now, analogously to \eqref{eq:def:Z1}, consider the estimator
\[
\overline{Z}_1(T)= \lambda m_i(t)+\frac{1-\lambda}{T}\sum_{t=1}^{T} X_j(t)\,,
\]
which is $\KK_i(\infty)$-measurable since $j$ is a neighbor of $i$. Recall that given $Y_j(t)$ and $\KK_j(t)$, $X_j(t)\sim \Ber(Y_j(t))$, and that $Y_j(t)\sim \law\big((\theta\given \KK_j(t)\big)$. Thus, tower property gives
\begin{equation}\label{eq:X:j:unbiased}
\E[X_j(t)\given \KK_j(t)]= \E\Big[\E[X_j(t)\given Y_j(t),\KK_j(t)]\bgiven \KK_j(t)\Big]=\E[Y_j(t)\given \KK_j(t)]=m_j(t)\,.
\end{equation}
Thus, for $0\leq t_1<t_2$, we have by tower property that
\begin{equation}\label{eq:X:uncorrelated}
\E\Big[\left(X_j(t_1)-m_j(t_1)\right)\left(X_j(t_2)-m_j(t_2)\right)\Big]=\E\left[\E\left[\left(X_j(t_1)-m_j(t_1)\right)\left(X_j(t_2)-m_j(t_2)\right)\bgiven \KK_j(t_2)\right]\right]=0\,.
\end{equation}
Having established \eqref{eq:X:j:unbiased} and \eqref{eq:X:uncorrelated}, the same argument as in the proof of Theorem \ref{thm:agreement} implies the following: by our assumption \eqref{eq:X:contradiction}, for large enough $T\geq T(\eps)$, we have
\begin{equation}\label{eq:contradict:conclusion}
\E\left[\left(\overline{Z}_1(T)-\frac{1}{2}\right)^2\right]<\E\left[\left(m_i(t)-\frac{1}{2}\right)^2\right]\,.
\end{equation}
On the other hand, since $\overline{Z}_1(T)$ is $\HH_i(\infty)$-measurable unbiased estimator of $\theta\sim \Unif([0,1])$ by \eqref{eq:X:j:unbiased}, thus \eqref{eq:contradict:conclusion} gives a contradiction. Therefore, we must have that $m_i(\infty)= m_j(\infty)$ a.s. for $i \sim j$, which implies that $m_i(\infty)\equiv m(\infty)$ for every $i\in V$ since $G$ is connected.

Finally, we prove that the agreed limiting posterior mean $m(\infty)=m_i(\infty), i\in V,$ equals $\E[\theta\given S_1,\ldots, S_n]$. By Lemma \ref{lem:beta}, we have that
\[
\overline{W}:=\E[\theta \given S_1,\ldots, S_n]=\frac{\sum_{i=1}^{n}S_i+1}{n+2}.
\]
Now, note that since $m(\infty)=\E[\theta\given \KK_i(\infty)]$ holds for $i\in V$, we have
\[
m(\infty)=\E\left[\E[\theta\given S_1,\ldots, S_n,\KK_i(\infty)]\bgiven \KK_i(\infty)\right]=\E\left[\E[\theta\given S_1,\ldots, S_n]\bgiven \KK_i(\infty)\right]=\E\big[\overline{W}\sbgiven \KK_i(\infty)\big]\,,
\]
where the second equality holds because conditional on the private signals $S_1,\ldots, S_n$, further conditioning on $\KK_i(\infty)$ does not change the posterior mean. Thus, it follows that
\begin{equation}\label{eq:optimal:crucial}
m(\infty)=\E\big[\overline{W}\sbgiven \KK_i(\infty)\big]=\E\big[\overline{W}\sbgiven m(\infty)\big]\,.
\end{equation}
Note that since $S_i$ is $\KK_i(\infty)$-measurable, we have
\[
\E\big[S_i \overline{W} \sbgiven m(\infty)\big]=\E\left[\E\big[S_i\overline{W}\sbgiven \KK_i(\infty)]\bgiven m(\infty)\right]=\E\left[S_i\E\big[\overline{W}\sbgiven \KK_i(\infty)]\bgiven m(\infty)\right]=\E\big[S_i\sbgiven m(\infty)]\cdot \E\big[\overline{W}\sbgiven m(\infty)]\,.
\]
Thus, summing the above equality for $i=1,\ldots, n$ shows that
\[
\E\left[(\overline{W})^2\bgiven m(\infty)\right]=\left(\E\left[ \overline{W}\sbgiven m(\infty)\right]\right)^2\,,
\]
which implies that $\overline{W}$ is $m(\infty)$-measurable. Therefore, combining with \eqref{eq:optimal:crucial}, we have 
\[
m(\infty)=\overline{W}\equiv \E[\theta\given S_1,\ldots, S_n]\,,
\]
almost surely, which concludes the proof of Theorem \ref{thm:bernoulli:sampling}.
\paragraph{Proof of Proposition \ref{prop:misspecified:bernoulli}}
By Lemma \ref{lem:beta}, note that the posteriors $\wt{\nu}_i(t)$ are beta distributions for $i=1,2$. To this end, denote their mean by
\[
M_1(t):=\frac{S_1+\sum_{s=1}^{t-1}X_2(s)+1}{t+2}\,,\quad M_2(t):=\frac{S_2+\sum_{s=1}^{t-1}X_1(s)+1}{t+2}\,.
\]
Then, Lemma \ref{lem:beta} shows that
\begin{equation}\label{eq:nu:beta}
\wt{\nu}_1(t)=\Beta\big((t+2)M_1(t)\,,\,(t+2)(1-M_1(t)\big)\,,\quad \wt{\nu}_2(t)=\Beta\big((t+2)M_2(t)\,,\, (t+2)(1-M_2(t)\big)\,.
\end{equation}
First, we claim that $M(t):=\frac{M_1(t)+M_2(t)}{2}$ converges almost surely to $M(\infty)$ by showing that $(M(t))_{t\geq 1}$ is a martingale. To this end, denote the $\sigma$-algebra $\FF(t):= \sigma\big(S_1,S_2, (X_i(s))_{i\in \{1,2\}, 1\leq s\leq t-1}\big)$. Then, $(M(t))_{t\geq 1}$ is $\FF(t)$-adapted. Moreover, we have by tower property that
\begin{equation}\label{eq:conditional:X:M}
\E\big[X_1(t)\sbgiven \FF(t)]=\E\Big[E\big[X_1(t)\sbgiven \FF(t), Y_1(t)\big]\bgiven \FF(t)\Big]=E\big[Y_1(t)\sbgiven \FF(t)]=M_1(t)\,,
\end{equation}
where the last equality holds since conditional on $\FF(t)$, $Y_2(t)\sim \wt{\nu}_2(t)$ and the mean of $\wt{\nu}_2(t)$ is $M_2(t)$ by \eqref{eq:nu:beta}. By symmetry, $\E[X_1(t)\given \FF(t)]=M_1(t)$ also holds. Thus, it follows that
\begin{equation}\label{eq:conditional:M}
\begin{split}
&\E\big[M_1(t+1)\sbgiven \FF(t)]
=\frac{t+2}{t+3}M_1(t)+\frac{1}{t+3}M_2(t)\,,\\
&\E\big[M_2(t+1)\sbgiven \FF(t)]
=\frac{t+2}{t+3}M_2(t)+\frac{1}{t+3}M_1(t)\,.
\end{split}
\end{equation}
Hence, adding the two equations above implies $\E[M(t+1)\given \FF(t)]=M(t)$. Further, $M(t)\in [0,1]$ is bounded,  thus $(M(t), \FF(t))_{t\geq 1}$ is a bounded martingale. Therefore, Doob's martingale convergence theorem shows that $M(t)$ converges almost surely to a random variable $M(\infty)\in [0,1]$, which proves our first claim.

Second, we claim that $D(t):=M_1(t)-M_2(t)\to 0$ as $t\to\infty$, almost surely. The proof is motivated from the analysis of Bernard Friedman's urn model in \cite{Freedman65}. First, observe that
\begin{equation}\label{eq:D:condition}
\begin{split}
\E\big[D(t+1)^2\sbgiven \FF(t)]
&=\frac{1}{(t+3)^2}\E\Big[\big((t+2)D(t)+X_2(t)-X_1(t)\big)^2\bgiven \FF(t)\Big]\\
&=\frac{t(t+2)}{(t+3)^2}D(t)^2+\frac{1}{(t+3)^2}\E\Big[\big(X_2(t)-X_1(t)\big)^2\bgiven \FF(t)\Big] \,,
\end{split}
\end{equation}
where the last equality holds since $\E[X_2(t)-X_1(t)\given \FF(t)]=-D(t)$ holds (cf. \eqref{eq:conditional:X:M}). Observe that since $X_1(t),X_2(t)\in \{0,1\}$, the last term satisfies $\E\big[\big(X_2(t)-X_1(t)\big)^2\bgiven \FF(t)\big]\leq 1$, a.s.. Thus, taking expectation in the equation above, we can bound
\begin{equation}\label{eq:important:ineq}
a_{t+1}:= \E\big[D(t+1)^2\big]\leq \frac{t(t+2)}{(t+3)^2}a_t+\frac{1}{(t+3)^2}\,.
\end{equation}
The important observation in \eqref{eq:important:ineq} is that $\frac{1}{(t+3)^2}$ is summable while $\frac{t(t+2)}{(t+3)^2}=1-O(t^{-1})$ holds. Any non-negative sequence $(a_t)_{t\geq 0}$ satisfying \eqref{eq:important:ineq} must converge to $0$: note that $(t+3)a_{t+1}\leq (t+2)a_t+\frac{1}{t+3}$ holds, and iterating gives
\[
(t+2)a_t\leq 2a_0+\sum_{s=0}^{t-1}\frac{1}{s+3}\asymp \log t\,,
\]
thus $a_t\lesssim \frac{\log t}{t}\to 0$ as $t\to\infty$. Therefore, we have shown that $D_t\to 0$ in $L^2$. Finally, note that if we denote $A(t):= D(t)^2-\sum_{i=0}^{t}\frac{1}{(i+2)^2}$, which is $\FF(t)$-adapted and bounded below, then \eqref{eq:D:condition} implies that
\[
\begin{split}
\E\big[A(t+1)\sbgiven \FF(t)\big]
&=\frac{t(t+2)}{(t+3)^2}D(t)^2+\frac{1}{(t+3)^2}\E\Big[\big(X_2(t)-X_1(t)\big)^2\bgiven \FF(t)\Big]-\sum_{i=0}^{t+1}\frac{1}{(i+2)^2}\\
&\leq D(t)^2-\sum_{i=0}^{t}\frac{1}{(i+2)^2}=A(t)\,,
\end{split}
\]
thus $\big(A(t)\big)_{t\geq 0}$ is a super martingale bounded below. Hence, by Doob's martingale convergence theorem, $A(t)\to A(\infty)$, a.s. for a random variable $A(\infty)$. This further implies that 
\[
D(t)\to D(\infty):= A(\infty)+\sum_{i=0}^{\infty}\frac{1}{(i+2)^2}\,.
\]
Since $D(t)\in [-1,1]$ is bounded and we have already shown that $a_t=\EE[D(t)^2]\to 0$, we must have that $D(\infty)=0$, a.s.. Therefore, we have proven the second claim that $D(t)\to 0$, almost surely.

Consequently, the previous two claims imply that $M_i(t)\to M(\infty)$ a.s. for $i=1,2$. Note that this implies that almost surely, the random measure $\wt{\nu}_i(t)$ converges to $\delta_{M(\infty)}$. Indeed, if we respectively denote $\E_{\wt{\nu}_i(t)}[X]$ and $\Var_{\wt{\nu}_i(t)}(X)$ by the expectation and the variance of $X\sim \wt{\nu}_i(t)$, then by \eqref{eq:nu:beta},
\[
\E_{\wt{\nu}_i(t)}[X]=M_i(t)\to M(\infty)\quad\textnormal{and}\quad \Var_{\wt{\nu}_i(t)}(X)=\frac{M_i(t)(1-M_i(t))}{t+3}\to 0\,,\quad\textnormal{a.s..}
\]
Furthermore, having established that $M_i(t)\to M(\infty)$ a.s. for $i=1,2$, it is standard in the analysis of urn models that $M(\infty)$ cannot have a point mass conditional on the initial data $S_1,S_2$. That is, the distribution of $M(\infty)$ conditional on $S_1,S_2$ cannot have an atom whose proof is the same with the analysis for time-dependent P\'{o}lya urn in \cite[Theorem 3]{Pemantle90} (see also \cite[Theorem 1.1]{AnMoRa:16}). Therefore, it follows that $\P\big(M(\infty)\neq \E[\theta\given S_1,\ldots, S_n]\big)=1$ holds, which concludes the proof of Proposition \ref{prop:misspecified:bernoulli}.

\section{Further discussions and future research directions}
In this section, we conclude by further discussing the importance of our main results, and mentioning open problems. In Section \ref{subsec:comparison}, we compare our main results for the sampling model in Section \ref{subsec:model} with the previous results for the action model. In Section \ref{subsec:further}, we discuss other aspects of the sampling model. In Section \ref{subsec:open}, we list open problems for future research directions.

\subsection{A comparison of the sampling and action models}\label{subsec:comparison}
With our main results in Section \ref{sec:model}, we can compare the statistical efficiency of a sampling model with that of an action model, where agents can only communicate binary information. Consider the simplest nontrivial graph $G$ consisting of single edge and the unknown quantity $\theta$ is in $\{0,1\}$ with the uniform prior, i.e. $\nu(0)=\nu(1)=1/2$. Further assume that the signals $S_i, i=1,2,$ has the density $2x$ in $[0,1]$ (w.r.t. to the Lebesgue measure) if $\theta=1$ and the density $2-2x$ in $[0,1]$ if $\theta=0$. For simplicity, we consider the action model where individuals are very limited and can only communicate $0$ or $1$ to their neighbors are at each round and that they do so simultaneously.
\begin{enumerate}
    \item In the action model, assuming a convex loss function that reveals the more likely posterior, agents communicate the more likely state of the world at each round. 
    In this case it is known that the agents will eventually agree in their views on the more likely state. However, it is also known that generally they will not agree on what the posterior is and their posteriors will not agree with the true posterior given all the data~~\cite{GaleKariv:03,RoSoVi:09}. 
    To see this in our simple example, consider the case where the first agent receives a signal $x_1 > 0.5$, and the second agent receives a signal $x_2 > 0.5$. In this case it is easy to verify that both agents will communicate $1$ indefinitely, agent $1$ posterior probability for $1$ will be $6 x_1 / (2+4x_1)$, agent $2$ posterior will be $6 x_2 / (2+4x_2)$. 
    This follows since $\P(x_1 \given \theta = 1)/\P(x_1 \given \theta = 0) = 2 x_1 /(2 - 2 x_1), \P(x_2 > 0 \given \theta = 1)/\P(x_2 > 1 \given \theta = 0) = 3$ so by Bayes law: 
    \[
    \frac{\P(\theta = 1\given x_1, x_2 >1/2)}{\P(\theta = 0\given| x_1, x_2 >1/2)} = 
     \frac{\P(x_1, x_2 >1/2 \given \theta = 1)}
    {\P(x_1, x_2 >1/2 \given \theta = 0)} = \frac{6 x_1}{2 - 2x_1}
    \]
    while the true posterior is $x_1 x_2 / (x_1 x_2 + (1-x_1) (1-x_2))$. 
    \item In the sampling model each individual communicates $1$ with probability $p$, where $p$ is its current posterior.
    Since the conditions of Theorem \ref{thm:eff:learning} is satisfied (see Example \ref{eg:lr:bounded}), Theorems \ref{thm:agreement} and \ref{thm:eff:learning} imply that both agents will have the same posterior distribution in the limit, which is the exact posterior given the original signals of both agents, i.e. $x_1 x_2 / (x_1 x_2 + (1-x_1) (1-x_2))$. 
\end{enumerate}

Thus, as a learning mechanism, sampling models are statistically more efficient than action models even in this simplest case and the phenomena in the example can be generalized to other networks, signal structures, and actions as long as the action space is finite. 

\subsection{Further discussions on the sampling model}
\label{subsec:further}
A common critique of the results on action and belief models on networks is that these results assume all individuals know the exact structure of the network. 
Our models make the same assumption. However, unlike many of the results in learning on networks, our results are of interest for any network, no matter how simple. Even in the case where the graph is a single edge, the convergence properties of the sampling model is highly non-trivial, and we will have to leave many questions open, e.g. speed of convergence. In this case, the assumption that the individuals know the network structure is perfectly reasonable. This assumption may also be reasonable for other small simple networks (e.g. a path of length $2$). Since the proofs apply for all connected networks, they are given in the more general setting. 

We are not aware of previous theoretical work considering sampling model on networks. In the majority of works, there is a single individual who interacts with the environment (and this is repeated many times). 
Our result analyze the asymptotic behavior of a sampling model involving {\em multiple} individuals.
In prior work, there are some sophisticated works involving the sampling model with a single individual. 
For example, the result of~\cite{SanbornGriffith:07} analyze a sampling model involving a single individual, which converges to the prior distribution. 
In~\cite{SanbornGriffith:07}, individuals do not change their distribution during the process. Instead the process is used to learn what is the prior distribution.

We emphasize that in this work, we
assume that one cannot design the actions of the agents. Rather, we view the sampling model as given to us, which models how individuals learn from each other. Our results show that despite of the
complexity of the sampling model, it is statistically optimal.

\subsection{Open Problems}\label{subsec:open}

Our work naturally leads to a number of fascinating future research directions:
\begin{itemize}
    \item It is natural to ask what is the convergence rate of the sampling models studied here. For example, how does the speed of convergence depend on the topology of the graph $G$? This is a very interesting question that is currently out of reach: in the Gaussian model (cf. Example \ref{eg:gaussian}), we compute the posterior \textit{exactly} in Appendix \ref{sec:gaussian}. Even for the simplest case where $G$ is a single edge, the convergence rate of the recursion (see Lemma \ref{lem:gaussian}) is highly non-trivial. We report the numerical simulation in Appendix \ref{subsec:numerical}, and empirically observe that, as one might expect, when $G$ is the clique, the speed of converge is faster than when $G$ is a cycle.
    \item To what extent can the result be extended if instead of exact sampling from the posterior some approximate sampling is used?
    The study of approximate Bayesian inference includes many such approximations, see e.g.~\cite{GeHoTe:15} and it is fascinating to explore to what extent do our results carry over these models. 
    \item A large body of experimental research supports sampling models or ``probability matching". It is fascinating to design an experiment to test if sampling models are good model of human interaction of multiple individuals who communicate with each other.  
\end{itemize}
\bibliographystyle{amsalpha}
\bibliography{social_learning,all,my}
\appendix

   \section{Omitted proofs}\label{sec:omittedproofs}
   In this appendix, we provide the omitted proofs from the main text. In Section \ref{subsec:proof:convergence}, we prove the convergence of posteriors of agents in Proposition \ref{lem:genconvergence}. In Section \ref{subsec:proof:suff:avghelps}, we prove Lemma \ref{prop:suff} and Lemma \ref{lem:avghelps}.

\subsection{Proof of convergence of posteriors}
\label{subsec:proof:convergence}
\begin{prop}[Convergence]\label{lem:genconvergence}
The sequence of (random) probability measures $(\nu_i(t))_{t\geq 0}$ converges weakly to (random) law $\nu_i(\infty):=\law\left(\theta \given \HH_i(\infty)\right)$, almost surely, i.e. for every $i\in V$, we have
\begin{equation}
    \nu_i(t)\wto \nu_i(\infty), \quad\textnormal{almost surely.}
\end{equation}
\end{prop}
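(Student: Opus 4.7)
The plan is to upgrade the pointwise almost sure convergence $F_{it}(u) \to F_{i\infty}(u)$ for $u \in \Q^d$ established in the main text via L\'{e}vy's upward theorem and a countability argument (equation \eqref{eq:convergence:distribution}) to pointwise convergence at every continuity point of $F_{i\infty}$. Once this is in hand, the standard characterization of weak convergence of probability measures on $\R^d$ by convergence of distribution functions at continuity points of the limit yields the proposition.

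First I would fix a realization in the almost sure event on which \eqref{eq:convergence:distribution} holds. Since $\nu_i(\infty)$ is a (random) probability measure on $\R^d$, the discontinuity set of its distribution function $F_{i\infty}$ is contained in the countable union of hyperplanes $\bigcup_{k=1}^{d}\{u \in \R^d : \nu_i(\infty)(\{x \in \R^d : x_k = u_k\}) > 0\}$, so the set of continuity points of $F_{i\infty}$ is dense in $\R^d$.

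The key step is a coordinate-wise sandwich. Fix a continuity point $u \in \R^d$ of $F_{i\infty}$ and choose sequences $a_k, b_k \in \Q^d$ with $a_k \leq u \leq b_k$ coordinate-wise and $a_k, b_k \to u$. Since multivariate distribution functions are monotone in each coordinate, $F_{it}(a_k) \leq F_{it}(u) \leq F_{it}(b_k)$ for every $t$. Letting $t \to \infty$ using convergence at rational points yields
\[
F_{i\infty}(a_k) \leq \liminf_{t \to \infty} F_{it}(u) \leq \limsup_{t \to \infty} F_{it}(u) \leq F_{i\infty}(b_k),
\]
and then $k \to \infty$ together with continuity of $F_{i\infty}$ at $u$ collapses both bounds to $F_{i\infty}(u)$, giving $F_{it}(u) \to F_{i\infty}(u)$.

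With almost sure pointwise convergence of distribution functions at every continuity point of the limit now in hand, the standard Portmanteau-style characterization of weak convergence on $\R^d$ immediately yields $\nu_i(t) \wto \nu_i(\infty)$ almost surely. The only delicate point is confirming that continuity of $F_{i\infty}$ at $u$ really does imply $F_{i\infty}(a_k), F_{i\infty}(b_k) \to F_{i\infty}(u)$ for arbitrary rational approximants, which follows from the equivalence between continuity of $F_{i\infty}$ (as a function on $\R^d$) at $u$ and the vanishing of $\nu_i(\infty)$-mass on the boundary of the lower-left rectangle $\{x : x \leq u\}$, a routine fact about multivariate distribution functions.
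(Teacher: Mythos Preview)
Your proposal is correct and takes essentially the same approach as the paper: sandwich $F_{it}(u)$ between its values at rational points on either side using coordinate-wise monotonicity, then use continuity of $F_{i\infty}$ at $u$ to collapse the bounds. The only cosmetic difference is that the paper phrases it with an $\varepsilon$ argument (finding $y_1<q_1<u<q_2<y_2$ with $F_{i\infty}(y_1),F_{i\infty}(y_2)$ within $\varepsilon$ of $F_{i\infty}(u)$) whereas you use rational sequences $a_k,b_k\to u$, but the underlying idea is identical.
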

\begin{proof}
 We have already shown that \eqref{eq:convergence:distribution} happens. Now, assume that \eqref{eq:convergence:distribution} happens and let $x\in \R^{d}$ be the continuity point of $F_{i\infty}(\cdot)$. Then for any $\eps>0$, there exists $y_1,y_2\in \R^{d}$ such that $y_1<x<y_2$ holds (here, $a<b$ for $a,b\in \R^{d}$ denotes coordinate-wise $a_i<b_i, i\leq d$), and \begin{equation*}
    F_{i\infty}(x)-\eps <F_{i\infty}(y_1)\leq F_{i\infty}(y_2)< F_{i\infty}(x)+\eps.
\end{equation*}
Since $\Q^{d}$ is dense in $\R^{d}$ and $y_1<x<y_2$ holds, there exists $q_1,q_2\in \Q^{d}$ such that $y_1<q_1<x<q_2<y_2$. Since $F_{i\infty}(\cdot)$ is a distribution function, it is monotonically increasing in each coordinate. Thus, the above inequality shows
\begin{equation*}
    F_{i\infty}(x)-\eps <F_{i\infty}(q_1)\leq  F_{i\infty}(q_2)< F_{i\infty}(x)+\eps.
\end{equation*}
Note that by \eqref{eq:convergence:distribution}, we have $\big|F_{it}(q_1)-F_{i\infty}(q_1)\big|\vee \big|F_{it}(q_2)-F_{i\infty}(q_2)\big|<\eps$ for sufficiently large $t$. Moreover, $F_{it}(\cdot)$ is also monotonically increasing in each coordinate. Thus, we have
\begin{equation*}
    F_{i\infty}(x)-2\eps <F_{it}(q_1)\leq F_{it}(x)\leq F_{it}(q_2)< F_{i\infty}(x)+2\eps.
\end{equation*}
Since $\eps$ is arbitrary, we have shown that $F_{it}(x)\to F_{i\infty}(x)$ as $t\to\infty$ holds on the continuity points of $F_{i\infty}$, which concludes the proof.
\end{proof}

\subsection{Proof of Lemma \ref{prop:suff} and Lemma \ref{lem:avghelps}}
\label{subsec:proof:suff:avghelps}
We first prove Lemma \ref{prop:suff}. It is proved via $2$ separate lemmas, namely Lemmas \ref{lem:bayes} and \ref{lem:finite} below:
\begin{lemma}[A sufficient condition for Bayes law]\label{lem:bayes}
 Suppose that there exists $\sigma$-finite regular Borel measures $\left\{\mu_i\right\}_{i \in V}$ such that the $\P_i(\cdot\given \theta)$ is absolutely continuous with respect to $\mu_i$ for every $\theta \in \supp(\nu)$ and $i\in V$. Then, $\left\{\P_i(\cdot\given \theta)\right\}_{i\in V}$ satisfies the Bayes law in Definition \ref{def:bayes}.
\end{lemma}
\begin{proof}
Denote $\P_{S}$ by the joint distribution of $(S_1,...,S_n)$. Then, for $x=(x_1,...,x_n)\in \supp(\P_{S})$ and $\eps>0$, we have by conditional independence of $S_i$ given $\theta$,
\begin{equation*}
\begin{split}
    &\P\Big(\theta \in A \bgiven S_i\in B_{\eps}(x_i),\,\forall i\leq n\Big)\\
    &=\frac{\prod_{i=1}^{n}\P\Big( S_i \in B_{\eps}(x_i)\bgiven \theta \in A\Big)}{\P\Big(S_i\in B_{\eps}(x_i),\,\forall i\leq n \Big)}\cdot \P(\theta\in A)
    \\
    &=\prod_{i=1}^{n}\P\Big(\theta \in A \given S_i \in B_{\eps}(x_i)\Big) \cdot \frac{\prod_{i=1}^{n}\P\Big(S_i\in B_{\eps}(x_i)\Big)}{\P\Big(S_i\in B_{\eps}(x_i),\,\forall i\leq n \Big) }\cdot\P(\theta\in A)^{-n+1},
\end{split}
\end{equation*}
where $B_{\eps}(z)$ for $z\in \R^{d}$ denotes the $L^2$ ball with radius $\eps$ around $z$. We now argue that sending $\eps\to 0$ gives our goal \eqref{eq:bayeslaw}. First, we have
\begin{equation*}
    \lim_{\eps \to 0}\P\Big(\theta \in A \bgiven S_i\in B_{\eps}(x_i),\,\forall i\leq n\Big)=\P\Big(\theta \in A \bgiven S_i=x_i,\,\forall i \leq n\Big),\quad\P_{S}\textnormal{-a.s }x, 
\end{equation*}
because if we consider the measure $\nu_A(B):=\P(\theta\in A,S \in B)$ for Borel measurable $B\subseteq \R^{dn}$, then $\nu_A\ll \P_S$ holds, so (generalized) Lebesgue differentiation theorem on Euclidean space (see e.g. Theorem 8.4.6 of \cite{BeCz09}) shows that for $\P_S$-a.s. $x$, 
\begin{equation*}
    \lim_{\eps \to 0}\frac{\nu_A\Big(\prod_{i=1}^{n}B_{\eps}(x_i)\Big)}{\P_S\Big(\prod_{i=1}^{n}B_{\eps}(x_i)\Big)}=\frac{\mathrm{d}\nu_A}{\rd \P_S}(x)=\P \Big(\theta\in A \bgiven S_i=x_i,\,\forall i \leq n\Big), 
\end{equation*}
where the last equality holds by the definition of regular conditional probability distribution. By the same argument, we have for every $i\leq n$,
\begin{equation*}
  \lim_{\eps \to 0}\P\Big(\theta \in A \bgiven S_i\in B_{\eps}(x_i)\Big)=\P\Big(\theta \in A \bgiven S_i=x_i\Big),\quad\P_{S_i}\textnormal{-a.s }x_i.
\end{equation*}
Thus, by the $4$ equations in the display above, proof is done if we show $\lim_{\eps\to 0}\frac{\prod_{i=1}^{n}\P\Big(S_i\in B_{\eps}(x_i)\Big)}{\P\Big(S_i\in B_{\eps}(x_i),\,\forall i\leq n \Big) }$ exists in $(0,\infty)$ for $\P_S$-a.s. $x$. Here, we will use our assumption that $\P_i(\cdot\given \theta)\ll \mu_i$. Denote its Radon-Nikodym derivative by $f_i(x\given \theta):= \frac{\rd \P_i(\cdot\given\theta)}{\rd \mu_i}(x)$. Then, for any Borel measurable $B_i \subseteq \R^{d}$, we have by Toninelli thoerem,
\begin{equation*}
    \P(S_i \in B_i) =\int_{\R^{d}}\P(S_i\in B_i \given \theta) \nu(\rd \theta)=\int_{B_i}\int_{\R^{d}}f_i(x\given\theta)\nu(\rd \theta)\mu_i(\rd x).
\end{equation*}
Thus, $\P_{S_i}\ll \mu_i$ holds with Radon-Nikodym derivative $\frac{\rd \P_{S_i}}{\rd \mu_i}(x)=\int_{\R^{d}}f_i(x\given\theta)\nu(\rd \theta)$. In particular, we have $\int_{\R^{d}}f_i(x\given\theta)\nu(\rd \theta)\in (0,\infty)$ for $\P_{S_i}$-a.s. $x$. Moreover, again by Lebesgue differentiation theorem, we have
\begin{equation}\label{tech-1}
    \lim_{\eps \to 0}\frac{\P(S_i\in B_{\eps}(x_i))}{\mu_i\left(B_{\eps}(x_i)\right)}=\int_{\R^{d}}f_i(x_i\given\theta)\nu(\rd \theta),\quad\P_{S_i}\textnormal{-a.s }x_i.
\end{equation}
By conditional independence of $S_i$'s given $\theta$, the same argument gives that we have that $\P_S\ll \otimes_{i=1}^{n}\mu_i$ with Radon-Nikodym derivative $\frac{\rd \P_S}{\rd \otimes_{i=1}^{n}\mu_i}(x)=\int_{\R^{d}}\prod_{i=1}^{n}f_i(x_i\given \theta)\nu(\rd \theta)\in (0,\infty)$, $\P_S$-a.s. $x$. Lebesgue differentiation theorem again gives
\begin{equation}\label{tech-2}
    \lim_{\eps \to 0}\frac{\P\Big(S_i\in B_{\eps}(x_i),\,\forall i \leq n\Big)}{\prod_{i=1}^{n}\mu_i\left(B_{\eps}(x_i)\right)}=\int_{\R^{d}}\prod_{i=1}^{n} f_i(x_i\given\theta)\nu(\rd \theta),\quad\P_{S}\textnormal{-a.s }x.
\end{equation}
Therefore, by \eqref{tech-1} and \eqref{tech-2} we have
\begin{equation*}
    \lim_{\eps\to 0}\frac{\prod_{i=1}^{n}\P\Big(S_i\in B_{\eps}(x_i)\Big)}{\P\Big(S_i\in B_{\eps}(x_i),\,\forall i\leq n \Big) }=\frac{\prod_{i=1}^{n}\int_{\R^{d}} f_i(x_i\given\theta)\nu(\rd \theta)}{\int_{\R^{d}}\prod_{i=1}^{n} f_i(x_i\given\theta)\nu(\rd \theta)}\in (0,\infty)\quad\P_{S}\textnormal{-a.s }x, 
\end{equation*}
which concludes the proof.
\end{proof}

\begin{lemma}[A sufficient condition for finite information]\label{lem:finite}
For $i\in V$, let $\P_{S_i}$ denote the marginal law of $S_i$. Under the same assumptions as in Lemma \ref{lem:bayes}, $\P_{S_i}$-a.s., the posterior $\law(\theta\given S_i)$ is absolutely continuous with respect to the prior $\nu$. Denote $p_i(x\given S_i)$ by its Radon-Nikodym derivative. Then, if
\begin{equation*}
    \E_{S_i}\int_{\R}\Big|\log \left(p_i\left(x\given S_i\right)\right)\Big| \nu (dx) <\infty,
\end{equation*}
then $\P_i(\cdot\given \theta)$ has finite information in the sense of Definition \ref{def:finite}.
\end{lemma}
\begin{proof}
For the first claim, if there exists a $\sigma$-finite $\mu_i$ such that $\P_i(\cdot\given\theta)\ll \mu_i$ for every $\theta\in \supp(\nu)$, then classical Bayes' theorem (see for e.g. Theorem 1.31 of \cite{Schervish95}) shows that $\P_{S_i}$-a.s., $\law(\theta\given S_i)\ll \nu$.

For the second claim, take Borel measurable $A$ such that $0<\nu(A)<1$. First, note that
\begin{equation*}
    \E\left|\log\left(\frac{\P(\theta\in A\given S_i)}{\P(\theta\in A^{c}\given S_i)}\right)\right|\leq \E\Big [-\log\P\left(\theta\in A\given S_i\right)\Big]+\E\Big[-\log\P\left(\theta\in A^c\given S_i\right)\Big],
\end{equation*}
thus to prove $\P_i(\cdot\given \theta)$ has finite information, it suffices to show $\E\left[-\log\P\left(\theta\in A\given S_i\right)\right]<\infty$. We can express
\begin{equation*}
    \E\Big [-\log\P\left(\theta\in A\given S_i\right)\Big]=\E\left[-\log\left(\nu(A)^{-1}\int_{A}p_i\left(x\given S_i\right)\nu(dx)\right)\right]+\E\left[-\log \nu(A)\right]
\end{equation*}
Since $x\to -\log x$ is convex, we can use Jensen's inequality to bound the first piece in the RHS above:
\begin{equation*}
\begin{split}
    \E\left[-\log\left(\nu(A)^{-1}\int_{A}p_i\left(x\given S_i\right)\nu(dx)\right)\right]
    &\leq \E\left[\nu(A)^{-1}\int_{A}-\log\left(p_i\left(x\given S_i\right)\right)\nu(dx)\right]\\
    &\leq \nu(A)^{-1} \E \int_{\R}\Big|\log \left(p_i\left(x\given S_i\right)\right)\Big| \nu (dx)<\infty.
\end{split}
\end{equation*}
Thus, for $\nu(A)\in (0,1)$, $\E\left[-\log\P\left(\theta\in A\given S_i\right)\right]<\infty$ and $\E\left[-\log\P\left(\theta\in A^c\given S_i\right)\right]<\infty$ hold, which concludes the proof.
\end{proof}
\paragraph{Proof of Lemma \ref{prop:suff}}
This is immediate from Lemma \ref{lem:bayes} and Lemma \ref{lem:finite}.
\paragraph{Proof of Lemma \ref{lem:avghelps}}
We aim to show that $\E(Z_1 - Z_2)^2=0$.
Let $M=(M_{i,j})_{i,j\leq 2}$ denote the $2\times 2$ matrix encoding the second moment of the random vector $(Z_1, Z_2)$, i.e. $M_{ii}=\E\left[Z_{i}^{2}\right], M_{ij}=\E\left[Z_iZ_j\right], i,j\in \{1,2\}$. The second moment of $Y(\lambda)$ can be expanded in terms of
the entries of $M$ as 
\begin{align*}
\E\left[Z(\lambda)^2\right] = \begin{bmatrix}
\lambda & 1-\lambda
\end{bmatrix}\cdot M \cdot \begin{bmatrix}
\lambda\\ 1-\lambda 
\end{bmatrix}= \lambda^2 (M_{11} + M_{22} - 2M_{12}) +
2\lambda (M_{12} - M_{22}) + M_{22}.
\end{align*}
Being a quadratic function in $\lambda$, the above is minimized at both $\lambda=0, 1$ if and only if it is a constant function. Thus, $M_{11} =M_{12}= M_{22} \equiv m$ holds and we have
$\E\left[ (Z_1 - Z_2)^2\right] = M_{11}+M_{22}-2M_{12}=0$. This concludes the proof of Lemma \ref{lem:avghelps}.

\section{Gaussian Models}\label{sec:gaussian}
We now consider the Gaussian sampling model in Example \ref{eg:gaussian}. For this specific model, the posterior update can be written explicitly, which can also be simulated. For simplicity, we only consider when $d=1$ and the prior $\nu =\normal(0,1)$, but all of our discussion carries through the general case with little modification.
\subsection{Analysis of the posterior distribution}
Recall that we denoted by $H_i(t):=\left(S_i,(Y_j(s))_{j\in \partial i, s<t}\right)$ the collection of messages agent $i$ has received up to time $t$. Then, it is straightforward to inductively deduce that $\{H_i(t)\}_{i\in V}$ is jointly Gaussian conditional on $\theta$. Moreover, the mean and the covariance can be computed iteratively.

\begin{lemma}[Explicit formula for the Gaussian case]\label{lem:gaussian}
For any $t\geq 0$, conditional on $\theta$, $\{H_i(t)\}_{i\in V}$ is jointly Gaussian with mean $\E[H_i(t)\given \theta]=\theta \mu_i(t)$ for some $\mu_i(t)\in \R^{t\cdot|\partial i|+1}$. Denote the covariance matrices by $\Sigma_{ij}(t):=\Cov(H_i(t),H_j(t)\given \theta)\in \R^{(t\cdot|\partial i|+1)\times(t\cdot|\partial j|+1)}$. Then, the posterior of agent $i$ at time $t$ is given by
\begin{equation}\label{eq:gaussian:posterior}
\begin{split}
    &\nu_i(t) = \normal\left(X_i(t),\sigma_i(t)\right),\quad\textnormal{where}\\
    &X_i(t):=\frac{\mu_i(t)^\sT \Sigma_{ii}(t)^{-1}H_i(t)}{\mu_i(t)^\sT \Sigma_{ii}(t)^{-1}\mu_i(t)+1}\quad\textnormal{and}\quad\sigma_i(t):=\frac{1}{\mu_i(t)^\sT \Sigma_{ii}(t)^{-1}\mu_i(t)+1}.
\end{split}
\end{equation}
Moreover, $\mu_i(t+1)$ and $\Sigma_{ij}(t+1)$ can be calculated from $\mu_i(t)$ and $\Sigma_{ij}(t)$ as follows. Note that by definition, $H_i(t+1)=\left(H_i(t), (Y_j(t))_{j\in \partial i}\right)$. Thus, in order to calculate $\mu_i(t+1)$ and $\Sigma_{ij}(t+1)$ from the previous time, it suffices to calculate the mean and the covariance structure for the samples $\{Y_i(t)\}_{i\in V}$ and history $\{H_i(t)\}_{i\in V}$, whose formula is given as follows: for $i,j\in V$, we have
\begin{equation}\label{eq:gaussian:update}
\begin{split}
    \E\left[Y_i(t)\given \theta\right]&= \frac{\mu_i(t)^{\sT}\Sigma_{ii}(t)^{-1}\mu_i(t)}{\mu_i(t)^{\sT}\Sigma_{ii}(t)^{-1}\mu_i(t)+1},\\
    \Cov\left(H_i(t),Y_j(t)\given \theta\right)&= \frac{\Sigma_{ij}(t)\Sigma_{jj}(t)^{-1}\mu_j(t)}{\mu_j(t)^{\sT}\Sigma_{jj}(t)^{-1}\mu_j(t)+1},\\
    \Cov\left(Y_i(t),Y_j(t)\given \theta \right)&=\frac{\mu_i(t)^{\sT}\Sigma_{ii}(t)^{-1}\Sigma_{ij}(t)\Sigma_{jj}(t)^{-1}\mu_j(t)}{\left(\mu_i(t)^{\sT}\Sigma_{ii}(t)^{-1}\mu_i(t)+1\right)\left(\mu_j(t)^{\sT}\Sigma_{jj}(t)^{-1}\mu_j(t)+1\right)}\\
    &\quad\quad+\frac{\one(i=j)}{\mu_i(t)^{\sT}\Sigma_{ii}(t)^{-1}\mu_i(t)+1}.
\end{split}
\end{equation}
\end{lemma}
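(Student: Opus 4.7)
\textbf{Proposed proof of Lemma \ref{lem:gaussian}.} The plan is to prove all three claims simultaneously by induction on $t$. For the base case $t=0$, we have $H_i(0)=S_i\sim \normal(\theta,1)$ given $\theta$, so the collection $\{H_i(0)\}_{i\in V}$ is jointly Gaussian conditional on $\theta$ with $\mu_i(0)=1$, $\Sigma_{ii}(0)=1$, and $\Sigma_{ij}(0)=0$ for $i\neq j$ by the conditional independence of the signals. This also verifies the posterior formula at $t=0$ by the standard Gaussian conjugate-prior calculation with prior $\normal(0,1)$.

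For the inductive step, assume the three claims hold at time $t$. The posterior formula \eqref{eq:gaussian:posterior} is then a direct application of Bayes' rule for Gaussians: since $H_i(t)\given \theta \sim \normal(\theta \mu_i(t),\Sigma_{ii}(t))$ and the prior on $\theta$ is $\normal(0,1)$, completing the square in the log-density
\begin{equation*}
-\tfrac12 \theta^2 -\tfrac12 \big(H_i(t)-\theta\mu_i(t)\big)^{\sT}\Sigma_{ii}(t)^{-1}\big(H_i(t)-\theta\mu_i(t)\big)
\end{equation*}
yields the posterior mean $X_i(t)$ and variance $\sigma_i(t)$ stated in the lemma. The crucial observation is that $X_i(t)$ is a \emph{linear function} of $H_i(t)$ with deterministic coefficients, and $\sigma_i(t)$ is deterministic.

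Next, using that agent $i$ samples $Y_i(t)\sim \nu_i(t)$ conditionally independently of everything else given $\HH_i(t)$, we may represent
\begin{equation*}
Y_i(t) = X_i(t) + \sqrt{\sigma_i(t)}\, \xi_i(t),
\end{equation*}
where the $\{\xi_i(t)\}_{i\in V}$ are i.i.d.\ $\normal(0,1)$ and independent of $\theta$ and of $\{H_j(t)\}_{j\in V}$. Since $(X_i(t))_{i\in V}$ is a deterministic linear function of $(H_i(t))_{i\in V}$ which is jointly Gaussian given $\theta$ by the inductive hypothesis, and we are adding independent Gaussian noise, the enlarged collection $\{H_i(t), Y_i(t)\}_{i\in V}$ remains jointly Gaussian given $\theta$. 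Because $H_i(t+1)=(H_i(t),(Y_j(t))_{j\in\partial i})$ is a subvector, this closes the induction on joint Gaussianity.

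Finally, the three update formulas in \eqref{eq:gaussian:update} follow by routine computation using this representation: $\E[Y_i(t)\given \theta]=\E[X_i(t)\given\theta]$ since $\xi_i(t)$ is centered and independent of $\theta$, and plugging in $\E[H_i(t)\given\theta]=\theta\mu_i(t)$ gives the stated expression. For the covariance $\Cov(H_i(t),Y_j(t)\given \theta)=\Cov(H_i(t),X_j(t)\given \theta)$ since $\xi_j(t)$ is independent of $H_i(t)$ and $\theta$; linearity of $X_j(t)$ in $H_j(t)$ together with $\Cov(H_i(t),H_j(t)\given\theta)=\Sigma_{ij}(t)$ gives the formula. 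For $\Cov(Y_i(t),Y_j(t)\given\theta)$, the linear parts contribute $\Cov(X_i(t),X_j(t)\given\theta)$ (the first summand in the stated formula), while the noise part contributes $\sigma_i(t)\one(i=j)$ by independence of $\xi_i(t),\xi_j(t)$ for $i\neq j$ and $\Var(\xi_i(t))=1$. There is no real obstacle here beyond careful bookkeeping; the only subtlety is correctly using the conditional independence of the sampling mechanism across agents to ensure that the noise terms $\xi_i(t)$ are mutually independent and independent of $\theta$, which is essential for preserving the conditional Gaussian structure at each step.
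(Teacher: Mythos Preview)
Your approach is essentially identical to the paper's: both complete the square in the Gaussian log-density to obtain the posterior formula \eqref{eq:gaussian:posterior}, then write $Y_i(t)=X_i(t)+\sqrt{\sigma_i(t)}\,\xi_i(t)$ with fresh independent standard normals to propagate joint Gaussianity and read off \eqref{eq:gaussian:update}. One small slip: in the base case, the signal structure is $S_i\mid\theta\sim\normal(a_i\theta,1)$, so $\mu_i(0)=a_i$ rather than $1$; this is inconsequential for the argument.
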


\begin{proof}
Let $\theta\in \normal(0,1)$ and consider a random vector $X\in \R^d$ whose distribution is given by $X\given \theta \sim \normal(\theta v, \Sigma)$ for some $v\in \R^d$ and $\Sigma \in \R^{d\times d}$. Denote the conditional density of $\theta\given X$ with respect to Lebesgue measure $\mu$ by $p(\theta\given X):=\frac{\rd \law(\theta\given X)}{\rd \mu}$. The densities $p(\theta\given X), p(\theta)$ are similarly defined. Then, by Bayes rule,
\begin{equation*}
    p(\theta\given X)\propto p(X\given \theta)\cdot p(\theta)\propto \exp\left(-\frac{1}{2}(X-\theta v)^{\sT}\Sigma^{-1}(X-\theta v)-\frac{\theta^2}{2}\right).
\end{equation*}
Thus, completing squares, we have that $\theta\given X$ is normally distributed:
\begin{equation}\label{eq:conditional:normal}
    \law(\theta\given X)= \normal\left(\frac{v^{\sT}\Sigma^{-1}X}{v^{\sT}\Sigma^{-1}v+1}, \frac{1}{v^{\sT}\Sigma^{-1}v+1}\right).
\end{equation}
Therefore, since $H_i(t)\sim \normal\left(\theta \mu_i(t),\Sigma_{ii}(t)\right)$ by our assumption, the expression for $\nu_i(t)\equiv \law\left(\theta\given H_i(t)\right)$ in \eqref{eq:gaussian:posterior} is immediate from \eqref{eq:conditional:normal}. Moreover, recall that agent $i$ samples $Y_i(t)\given H_i(t)\sim \nu_i(t)=\normal\left(X_i(t),\sigma_i(t)\right)$ conditionally independent of all other random variables at time $t$. Thus, we can write
\begin{equation}\label{eq:Y}
    Y_i(t)=X_{i}(t)+\sqrt{\sigma_i(t)}Z_i(t)=\frac{\mu_i(t)^\sT \Sigma_{ii}(t)^{-1}H_i(t)}{\mu_i(t)^\sT \Sigma_{ii}(t)^{-1}\mu_i(t)+1}+\frac{Z_i(t)}{\sqrt{\mu_i(t)^\sT \Sigma_{ii}(t)^{-1}\mu_i(t)+1}},
\end{equation}
where $Z_i(t)$ is fresh i.i.d standard normal independent of all other random variables. From \eqref{eq:Y}, it is straightforward to see \eqref{eq:gaussian:update}.
\end{proof}
Therefore, by Theorem \ref{thm:agreement} and \ref{thm:eff:learning}, we have that for agent $i$, almost surely,
\begin{equation}\label{eq:gaussian:posterior:limit}
    \nu_i(t)\wto \law(\theta\given S_1,...,S_n)=\normal\left(X(\infty),\sigma(\infty)\right),
\end{equation}     
\[
X(\infty):=\frac{\sum_{i=1}^{n}a_i S_i}{\sum_{i=1}^{n} {a_i^2}+1}\textnormal{ and }\sigma(\infty):= \frac{1}{\sum_{i=1}^{n} {a_i^2}+1},
\]
where $\nu_i(t)$ takes the explicit form \eqref{eq:gaussian:posterior}. In particular, we have $\lim_{t\to\infty}\sigma_i(t)=\sigma(\infty)$ and $\lim_{t\to\infty} X_i(t)=X(\infty)$, a.s.. Now, observe that $(X_i(t))_{t\geq 0}=\{\E\left[\theta \given \HH_i(t)\right]\}_{t\geq 0}$ is a Doob martingale with variance $\Var(X_i(t))=\sigma_i(t)(1-\sigma_i(t))$ from its definition \eqref{eq:gaussian:posterior}. Thus, $(X_i(t))_{t\geq 0}$ is a $L^2$ bounded martingale and by martingale convergence, we have the following corollary.
\begin{cor}\label{cor:gaussian}
For every agent $i\in V$, we have $\lim_{t\to\infty}\sigma_i(t)=\sigma(\infty)$ and $\lim_{t\to\infty}X_i(t)=X(\infty)$, a.s. and in $L^2$.
\end{cor}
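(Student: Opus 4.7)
The plan is to piece together two things already in hand: (i) the almost-sure weak convergence $\nu_i(t) \wto \normal(X(\infty),\sigma(\infty))$ from \eqref{eq:gaussian:posterior:limit}, which will handle $\sigma_i(t)\to \sigma(\infty)$ and the almost-sure half of $X_i(t)\to X(\infty)$, and (ii) Doob's $L^2$ martingale convergence theorem, which will take care of the $L^2$ part. Concretely, on the almost-sure event on which $\nu_i(t)\wto\normal(X(\infty),\sigma(\infty))$, the characteristic functions $\xi\mapsto\exp(\ii\xi X_i(t)-\sigma_i(t)\xi^2/2)$ must converge pointwise to $\xi\mapsto\exp(\ii\xi X(\infty)-\sigma(\infty)\xi^2/2)$; comparing moduli and phases then forces $X_i(t)\to X(\infty)$ and $\sigma_i(t)\to\sigma(\infty)$ on this event. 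Since $\sigma_i(t)$ depends only on the deterministic quantities $\mu_i(t)$ and $\Sigma_{ii}(t)$ in \eqref{eq:gaussian:posterior}, its convergence is in fact deterministic.

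For the $L^2$ statement, I would read off from \eqref{eq:gaussian:posterior} that $X_i(t)=\E[\theta\given\HH_i(t)]$, so $(X_i(t))_{t\geq 0}$ is a Doob martingale with respect to $(\HH_i(t))_{t\geq 0}$. Uniform $L^2$-boundedness then comes from the law of total variance: since the Gaussian posterior has (deterministic) conditional variance $\sigma_i(t)\in(0,1]$,
\begin{equation*}
1=\Var(\theta)=\E\big[\Var(\theta\given\HH_i(t))\big]+\Var\big(\E[\theta\given\HH_i(t)]\big)=\sigma_i(t)+\Var(X_i(t)),
\end{equation*}
so $\Var(X_i(t))=1-\sigma_i(t)\leq 1$ for all $t$. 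Doob's $L^2$ convergence theorem therefore yields $X_i(t)\to\E[\theta\given\HH_i(\infty)]$ both almost surely and in $L^2$. By Theorem \ref{prop:genefficientlearning}, the conditional law of $\theta$ given $\HH_i(\infty)$ coincides with the one given $(S_1,\dots,S_n)$, so the limit must equal $\E[\theta\given S_1,\dots,S_n]=X(\infty)$, matching the a.s.\ limit identified in step (i).

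There is no real obstacle here: the heavy lifting has already been carried out in Theorems \ref{thm:agreement} and \ref{prop:genefficientlearning}, and the corollary is essentially their specialization to the Gaussian parametrization combined with one application of Doob's theorem. The only check worth writing out carefully is the identification $X_i(t)=\E[\theta\given\HH_i(t)]$ and the variance bound, both of which are immediate from the explicit formula \eqref{eq:gaussian:posterior} and the fact that $\sigma_i(t)$ is deterministic in the Gaussian setting.
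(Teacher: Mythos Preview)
Your proposal is correct and follows essentially the same route as the paper: almost-sure convergence of $X_i(t)$ and $\sigma_i(t)$ is read off from the weak convergence \eqref{eq:gaussian:posterior:limit} (you spell out the characteristic-function step that the paper leaves implicit), and the $L^2$ convergence of $X_i(t)$ comes from recognizing it as an $L^2$-bounded Doob martingale. Your variance computation $\Var(X_i(t))=1-\sigma_i(t)$ via the law of total variance is in fact cleaner than the paper's stated $\sigma_i(t)(1-\sigma_i(t))$, though either bound suffices for $L^2$-boundedness.
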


\subsection{Numerical experiments}\label{subsec:numerical}
We numerically examine the convergence of $X_i(t)$ and $\sigma_i(t)$ in Corollary \ref{cor:gaussian} by simulating the sampling model on small graphs. First, we consider the simplest case possible when there are $2$ agents communicating with each other. Figure \ref{fig:edge} below reports a realization of the posterior means $\{X_{i}(t)\}_{t\geq 0}, i=1,2,$ when the signal strengths are $a_1=1$ and $a_2=2$. Indeed, after $t=500$, we see that the posterior means for both of the agents are close to the Bayes posterior mean $X(\infty)$. 

Next, we compare the cases where $G$ is a clique or a cycle with $n=7$ agents. We take the signal structure to be $a_i=i, 1\leq i \leq 7$. For the cycle case, we assume that the agent $i$ is connected to agents $i-1$ and $i+1$ (here, the numbers are calculated mod $7$). Figure \ref{fig:cycle:clique} reports the posterior variances for times up to $t=20$, which are recursively computed by equations \eqref{eq:gaussian:posterior} and \eqref{eq:gaussian:update}. Note that posterior variances monotonically decreases, and this is expected from \eqref{eq:gaussian:posterior} since $\{X_i(t)\}_{t\geq 0}$ being a martingale has increasing variance, so $\mu_i(t)^{\sT}\Sigma_ii(t)^{-1}\mu_i(t)$ is increasing in $t\geq 0$. Moreover, as one would expect, the fully connected network has faster rate of convergence to the variance of the Bayes posterior $\sigma(\infty)$.
\begin{figure}[ht]
\phantom{A}\hspace{-0.5cm}
\includegraphics[width=0.5\textwidth]{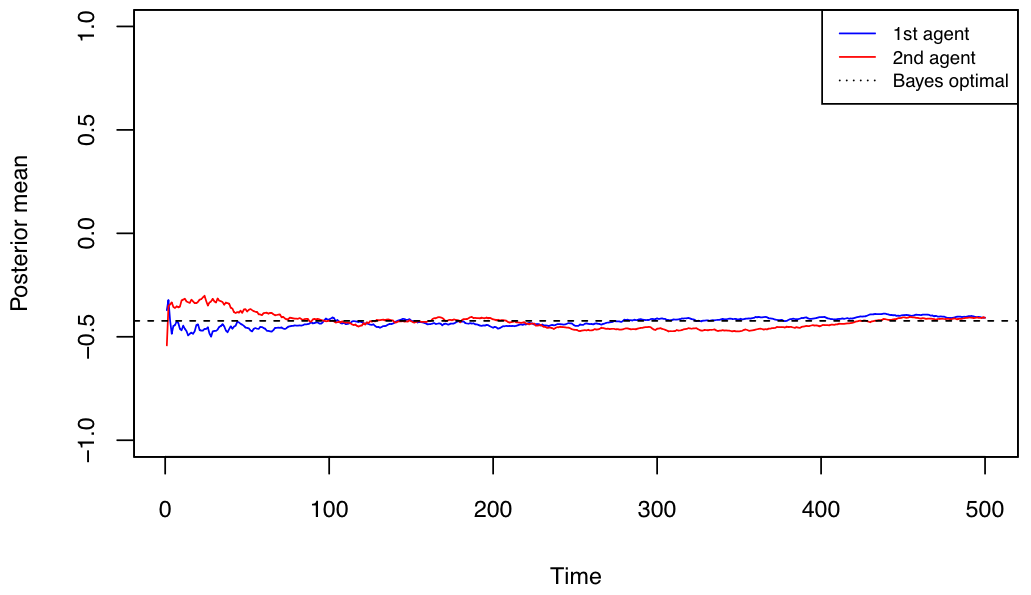}
\includegraphics[width=0.5\textwidth]{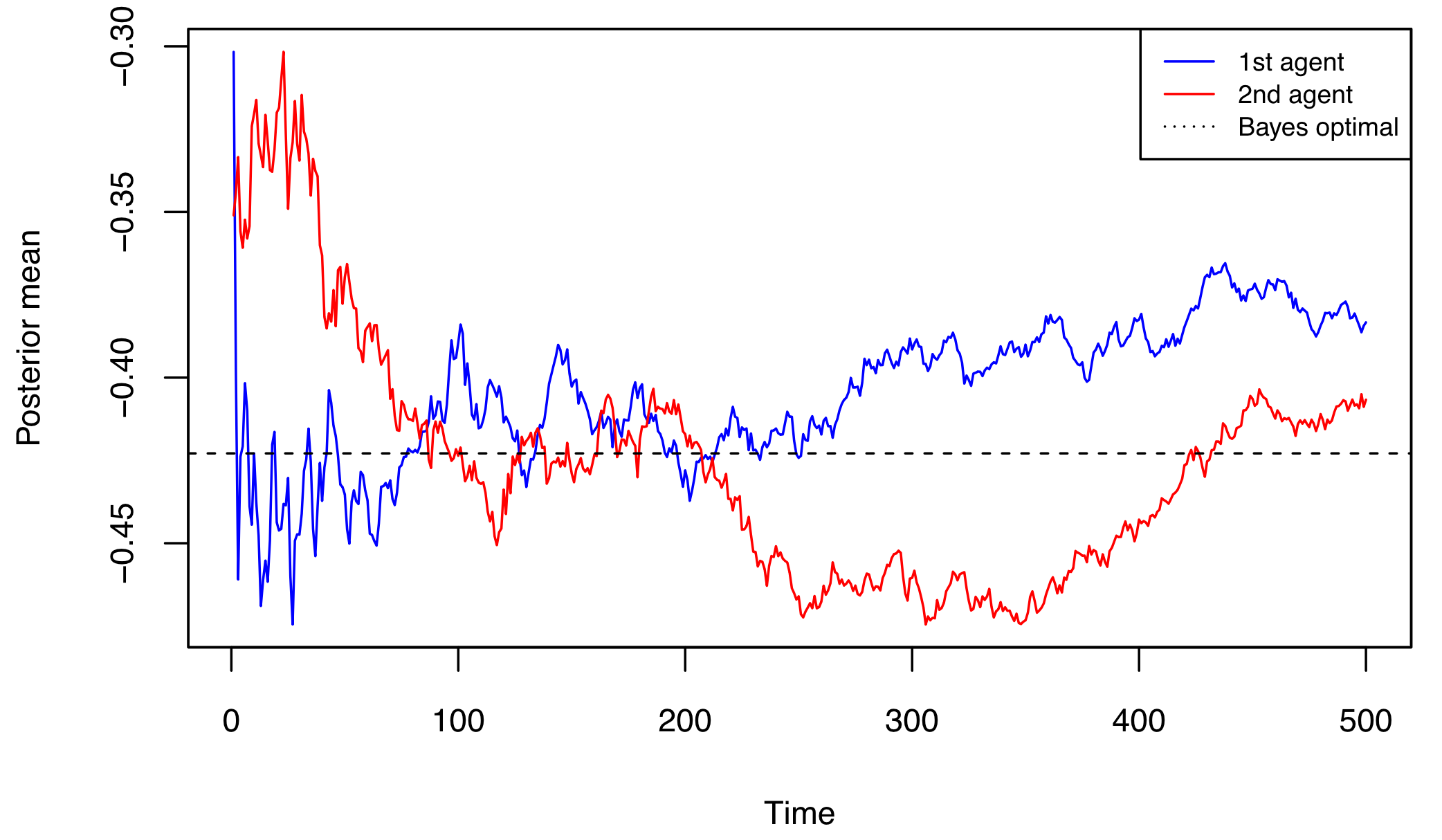}
    \caption{{\footnotesize A realization of posterior means $(X_i(t))_{0\leq t\leq 500}, i=1,2,$ when $G$ is a single edge with Gaussian environment. Left: $(X_i(t))_{0\leq t\leq 500}, i=1,2,$ at the scale of $[-1,1]$, whose scale matches the variance of the signals. Right: the same plot, but zoomed in so that the fluctuations are better seen. Blue line correspond to the first agent with signal strength $a_1=1$ and red line corresponds to the second agent with signal strength $a_2=2$. Here, $\theta$ drawn from $\normal(0,1)$ was $\theta \approx-0.502$ and the $S_1, S_2$ drawn respectively from $\normal(\theta, 1)$ and $\normal(2\theta,1)$ was $S_1\approx-0.371$ and $S_2\approx-1.08$. Note that at time $t=0$, the posterior mean is $X_1(0)=S_1/a_1\approx-0.371$ and $X_2(0)=S_2/a_2\approx -0.542$.  Horizontal dashed line is the mean of the Bayes posterior $\E[\theta \given S_1,S_2]=\frac{a_1 S_1+a_2 S_2}{a_1^2+a_2^2+1}\approx -0.423$. At time $t=500$, $X_1(500)\approx -0.406$ and $X_2(500) \approx -0.407$.}}
    \label{fig:edge}
\end{figure}

\begin{figure}[ht!]
    \centering
    \includegraphics[width=0.75\textwidth]{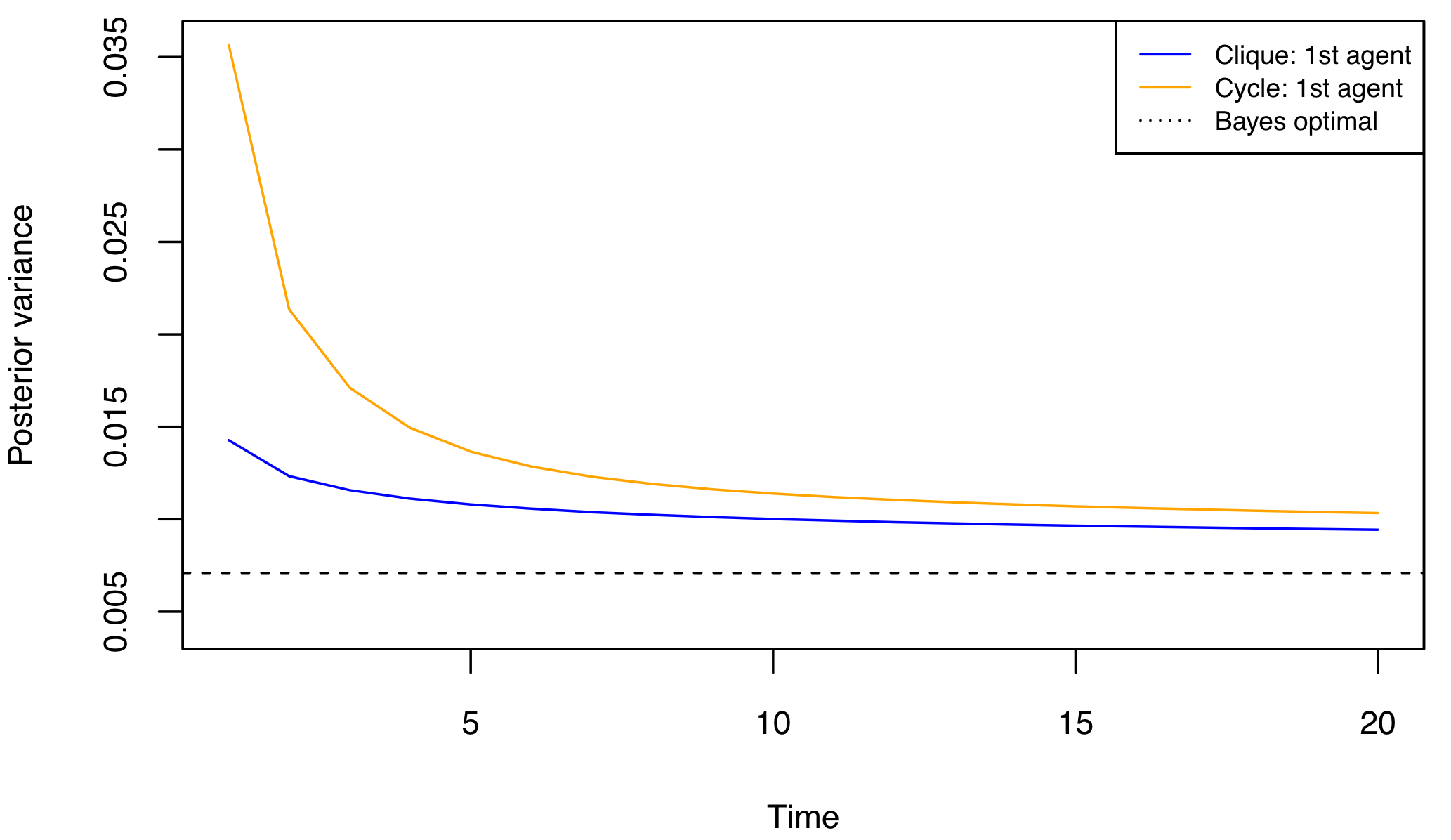}
    \caption{{\footnotesize Posterior variances for the cases where $G$ is a clique or a cycle with $7$ agents with Gaussian environment up to time $t=20$. Here, signal structure is taken to be $a_i= i, 1\leq i \leq 7$. Blue line corresponds to the first agent in the fully connected network and orange line corresponds to the first agent in the cycle network. Horizontal dashed line is the variance of the Bayes posterior $(\sum_{i=1}^{7}i^2+1)^{-1}\approx 0.0071$.}}
    \label{fig:cycle:clique}
\end{figure}

\end{document}